\documentclass[12pt]{amsart}
\usepackage{geometry}                % See geometry.pdf to learn the layout options. There are lots.
\geometry{letterpaper}                   % ... or a4paper or a5paper or ... 
\usepackage{graphicx}
\usepackage{amssymb}
\usepackage{epstopdf}
\DeclareGraphicsRule{.tif}{png}{.png}{`convert #1 `dirname #1`/`basename #1 .tif`.png}

\newtheorem{theorem}{Theorem}

\newtheorem{lemma}{Lemma}

\newtheorem{proposition}{Proposition}

\numberwithin{equation}{section}

\title{Implications of subconvexity bounds for the moments of $\zeta(s)$}

\author{Kevin Smith}
%\date{}                                           % Activate to display a given date or no date

\begin{document}
\maketitle

\begin{abstract}
It is well-known that upper bounds for moments of the Riemann zeta function $\zeta(s)$ have implications for subconvexity bounds. In this paper we explore some implications in the opposite direction using functional analysis in the right-half of the critical strip. The main results characterise potential transitions in the behaviour of the moments. 
\end{abstract}
\section{Introduction}
%\subsection{}
Since Montgomery's work \cite{M} on the pair-correlation of the zeros of the Riemann zeta function $\zeta(s)$, in particular the work of Katz and Sarnak \cite{KSar}, Keating and Snaith \cite{KS}, Conrey, Farmer, Keating, Rubinstein and Snaith \cite{CFKRS} and Farmer, Gonek and Hughes \cite{FGH}, it is now widely believed that zero distributions, moments and bounds in families of $L$-functions approximating critical lines and critical points correspond to those of the characteristic polynomials of large random matrices in one of the classical compact groups. For the moments of $\zeta(s)$,
\begin{eqnarray}
M_k(\sigma,T)=\int_0^T    \left|\zeta\left(\sigma+it\right)\right|^{2k} dt,\nonumber
\end{eqnarray}
it was conjectured (assuming the Riemann hypothesis) by Keating and Snaith \cite{KS} by analogy with the unitary case, and by Conrey and Ghosh \cite{CGh} and Conrey and Gonek \cite{CG} using conjectures on additive divisor sums, that 
 \begin{eqnarray}
 M_k(1/2,T)\sim a_kg_k T(\log T)^{k^2}
 \end{eqnarray} 
 where
\begin{eqnarray}\label{as}
a_k=\prod_{p\textrm{ prime}}\left(1- \frac{1}{p}\right)^{(k-1)^2}\sum_{n= 0}^{\infty} {k-1\choose n}^2p^{-n}\hspace{0.4cm}\textrm{and    }\hspace{0.4cm}g_k=\frac{G^2(k+1)}{G(2k+1)}
\end{eqnarray}
in which $G(z)$ is Barnes' $G$-function. In particular, the work of Keating and Snaith and Gonek, Hughes and Keating \cite{GHK} predicts that the factor $g_k(\log T)^{k^2}$ arises from the influence of the zeros and their distribution on the critical line $\sigma=1/2$, while the constant factor $a_k$ arises from the influence of the primes. More recently, building on work of Soundararajan \cite{S}, Harper \cite{H} has shown that the Riemann hypothesis alone implies that
\begin{eqnarray}\label{conj}
M_k(1/2,T)\ll_k T(\log T)^{k^2}\hspace{1cm}(k>0).
\end{eqnarray} 
Lower bounds for $M_k(1/2,T)$ of the correct order of magnitude for all $k\geq 1$ were computed by  Radziwiłł and Soundararajan \cite{RS} and for all $k>0$ by Heap and Soundararajan \cite{HS}. However, the upper bound in (\ref{conj}) is known unconditionally only when $k\leq 2$. The cases $k=1$ and $2$ are classical results of Hardy and Littlewood \cite{HL} and Ingham \cite{I}, respectively. 
The difficulties involved in computing $M_k(\sigma,T)$ increase not only with increasing $k$ on the critical line, but also as $\sigma$ decreases toward $1/2$ for each fixed $k\in\mathbb{N}$. However, in the region $1/2<\sigma<1$ where zeros are rarer (of course, the Riemann hypothesis predicts there are none), it is expected that $M_k(\sigma,T)$ is completely determined by the influence of the primes. Indeed, this is a consequence of the Lindel\"of hypothesis which we discuss below.

Denoting by $d_k(n)$ the number of ways of writing $n$ as a product of $k$ factors,  $M_k(\sigma,T)$ obeys a convexity principle as a function of $\sigma$, one consequence of which is that the statements 
\begin{eqnarray}\label{asy1}
M_k(\sigma_k,T) \ll T^{1+\epsilon}\hspace{1cm}(\sigma_k\geq 1/2),
\end{eqnarray}
\begin{eqnarray}\label{asy2}
M_k(\sigma,T)\ll T \hspace{1cm}(1/2\leq\sigma_k<\sigma<1)
\end{eqnarray}
and
\begin{eqnarray}\label{asy3}
\lim_{T\rightarrow\infty}\frac{M_k(\sigma,T)}{T} =\sum_{n=1}^{\infty}\frac{d^2_k(n)}{n^{2\sigma}} \hspace{1cm} (1/2\leq\sigma_k<\sigma<1)
\end{eqnarray}
are equivalent (Titchmarsh \cite{Titch}, Section 7.9). Another consequence is that for every $k\in\mathbb{N}$ there is a $\sigma_k<1$ with the above property. General results of this type are given by Iv\'ic (\cite{IV}, Section 8.5). However, beyond the theorems of Hardy and Littlewood and Ingham mentioned above which yield  $\sigma_1=\sigma_2=1/2$, currently the best known estimates are $\sigma_3\leq 7/12$ (due to Iv\'ic cited above), $\sigma_4\leq 5/8$ and $M_3(1/2,T)\ll_{\epsilon} T^{5/4+\epsilon}$. The latter two results are consequences of Heath-Brown's  twelfth moment estimate \cite{HB}. 

Subconvexity bounds are described by the continuous non-negative non-increasing convex function
\begin{eqnarray}
\mu(\sigma)=\inf\{\mu:\zeta(\sigma+it)\ll t^{\mu}\}.\nonumber
\end{eqnarray}
Here, the Lindel\"of hypothesis asserts that $\mu(1/2)=0$ (currently the best known bound is $\mu(1/2) < 13/84$ due to Bourgain \cite{Bo3}). 
In terms of moments, the Lindel\"of hypothesis is equivalent to the statement that $M_k(1/2,T) \ll_{k,\epsilon} T^{1+\epsilon}$ for all $k\in\mathbb{N}$, i.e. 
\begin{eqnarray}\label{asy4}
\mu(1/2)=0\Leftrightarrow \sigma_k=1/2\hspace{0.5cm} (k\in\mathbb{N})
\end{eqnarray}
due to Hardy and Littlewood \cite{HL2} and, since (\ref{asy1}), (\ref{asy2}) and (\ref{asy3}) are equivalent, it is evident that the Lindel\"of hypothesis depends only on the moments in the region $\sigma>1/2$.
In fact, it is a consequence of quite general properties of $\zeta(s)$ that the implication (\ref{asy4}) has a ``pointwise'' analogue in one direction at least, i.e.
\begin{eqnarray}\label{imp1}
\sigma_k=1/2 \Rightarrow \mu(1/2)<1/2k
\end{eqnarray}
(for instance, see Iv\'ic (\cite{IV}, Section 8.1) or Heath-Brown's notes (\cite{Titch}, Chapter 7). Yet, beyond that which can be derived as a consequence of convexity, i.e. results of the type given by Iv\'ic (\cite{IV}, Section 8.5), the implications of
\begin{eqnarray}\label{mustate}
\mu(1/2)<1/2k
\end{eqnarray}
for the numbers $\sigma_k$ are unclear. In particular, the converse statement of (\ref{imp1}) is not known. Similarly, despite that the inverse statement $\sigma_k>1/2$ certainly implies that $\mu(1/2)>0$, in other words
\begin{eqnarray}\label{inverse}
\limsup_{T\rightarrow\infty}\frac{M_k(\sigma,T)}{T}=\infty\hspace{1cm}(1/2\leq \sigma< \sigma_k),
\end{eqnarray}
it is not apparent what lower bound for $\mu(1/2)$ could be given if $\sigma_k>1/2$ were true for some $k\in\mathbb{N}$.

In the absence of further information in this direction, in this paper we assume (\ref{mustate}) and establish some results of a probabilistic character. The main idea in our work is that (\ref{mustate}) permits us to perform a sort of ``Fourier analysis'' that leads to particular insights regarding those values of $1/2<\sigma<1$ for which $M_k(\sigma,T)\ll_k T$, or not. 
These are contained in theorems  \ref{zetacns} and \ref{thzeta} below.

We denote by
\begin{eqnarray}\label{densitydef}
|S|=\lim_{T\rightarrow\infty}\frac{\textrm{meas}\left(S\cap [0,T]   \right)}{T}\nonumber
\end{eqnarray}
the natural density of Lebesgue measurable subsets $S\subseteq\mathbb{R}^{+}$ (if $|S|=0$ we say that $S$ is null) and 
$\textrm{Pr}\left(|g|\geq x\right)=|X|$ where $X\subseteq \mathbb{R}^{+}$ is the subset on which $|g|\geq x$. To begin,  we observe that the trivial bound 
\begin{eqnarray}\label{holderbound}
\frac{\left|\int_{0}^Tf^2gdt\right|}{\int_{0}^T|f|^2dt}\leq\textrm{ess sup}_{[0,T]}|g|\nonumber
\end{eqnarray}
may be significantly improved when $T$ is large if $f$ is not concentrated on a null set, by which we mean the following. 

\begin{proposition}[Concentration on a null set]\label{holder} The following statements are equivalent. If $f$ satisfies them, we say that $f$ is not concentrated on a null set (not CNS).
\begin{itemize}
\item[(\emph{a})]{If $g$ is bounded then
\begin{eqnarray}\label{prob}
\limsup_{T\rightarrow\infty}\frac{\left|\int_{0}^Tf^2gdt\right|}{\int_{0}^T|f|^2dt}\leq \sup\{x:\Pr(|g|\geq x)>0\}.
\end{eqnarray}
}
\item[(\emph{b})]{If $S$ is null then 
\begin{eqnarray}\label{meas}
\int_{S\cap[0,T]}|f|^2dt=o\left(\int_{0}^T|f|^2dt\right)\hspace{1cm}(T\rightarrow\infty).
\end{eqnarray}
}
\end{itemize}
Moreover, if $\int_{0}^T|f|^2dt\gg T$, we may include 
\begin{itemize}
\item[(\emph{c})]{If $S$ is null then for every $\epsilon>0$ there is a bounded function $f_{N,S}$ $(N=N(\epsilon))$ such that 
\begin{eqnarray}\label{ot}
\limsup_{T\rightarrow\infty}\frac{\int_{S\cap[0,T]}|f-f_{N,S}|^2dt}{\int_{0}^T|f|^2dt}< \epsilon.
\end{eqnarray}
}
\end{itemize}
If also $\int_{0}^T|f|^2dt\ll T$ then condition \emph{(}c\emph{)} may be replaced with 
\begin{itemize}
\item[(\emph{d})]{For every $\epsilon>0$ there is a bounded function $f_{N}$ $(N=N(\epsilon))$ such that 
\begin{eqnarray}\label{sup}
\limsup_{T\rightarrow\infty}\frac{1}{T}\int_{0}^{T}|f-f_{N}|^2dt< \epsilon.
\end{eqnarray}
}
\end{itemize}
\end{proposition}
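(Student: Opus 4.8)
The plan is to verify the web of implications (b)$\Leftrightarrow$(a), (b)$\Rightarrow$(c), (c)$\Rightarrow$(b), (b)$\Rightarrow$(d) and (d)$\Rightarrow$(b), invoking the size conditions on $I(T):=\int_0^T|f|^2\,dt$ only where needed, and using the truncations $f_N:=f\,\mathbf{1}_{\{|f|\le N\}}$ (which are bounded, $|f_N|\le N$) as the approximating functions in (\ref{ot}) and (\ref{sup}) throughout. For (b)$\Rightarrow$(a): given bounded $g$, set $m:=\sup\{x:\Pr(|g|\ge x)>0\}$ and, for $\epsilon>0$, note that $S:=\{t:|g(t)|>m+\epsilon\}$ is null; splitting the integral over $S$ and its complement in $[0,T]$ gives $|\int_0^Tf^2g\,dt|\le(m+\epsilon)\,I(T)+\|g\|_\infty\int_{S\cap[0,T]}|f|^2\,dt=(m+\epsilon)\,I(T)+o(I(T))$ by (\ref{meas}), and $\epsilon\to0$ yields (\ref{prob}). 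For (a)$\Rightarrow$(b): given a null set $S$, apply (\ref{prob}) to $g:=\mathbf{1}_S\,\overline{f}^{2}/|f|^2$ (with the convention $0/0=0$); then $|g|\le\mathbf{1}_S$, so $\sup\{x:\Pr(|g|\ge x)>0\}=0$, while $f^2g=|f|^2\,\mathbf{1}_S$, whence $\int_{S\cap[0,T]}|f|^2\,dt=o(I(T))$, which is (\ref{meas}).

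The implication (b)$\Rightarrow$(c) needs no hypothesis on the size of $I(T)$: take $f_{N,S}=f_N$ for any $N$; since $f-f_N=f\,\mathbf{1}_{\{|f|>N\}}$, the numerator in (\ref{ot}) equals $\int_{(S\cap\{|f|>N\})\cap[0,T]}|f|^2\,dt$, and because $S\cap\{|f|>N\}\subseteq S$ is null this is $o(I(T))$ by (\ref{meas}), so the limsup is $0$. For (c)$\Rightarrow$(b) and (d)$\Rightarrow$(b), fix a null $S$ and $\epsilon>0$ and apply the inequality $\int_{S\cap[0,T]}|f|^2\,dt\le 2\int_{S\cap[0,T]}|f-h|^2\,dt+2\|h\|_\infty^2\,\mathrm{meas}(S\cap[0,T])$, where $h$ is the approximating function ($h=f_{N,S}$ in case (c); $h=f_N$ in case (d), using also $\int_{S\cap[0,T]}|f-f_N|^2\le\int_0^T|f-f_N|^2$): the right-hand side is at most $2\epsilon\,I(T)+o(T)$ in the limsup, and since $I(T)\gg T$ the $o(T)$ is $o(I(T))$, so letting $\epsilon\to0$ gives (\ref{meas}). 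These are routine $L^2$ manipulations.

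The substantive step is (b)$\Rightarrow$(d) when $I(T)\asymp T$. Since $\int_0^T|f-f_N|^2\,dt=\int_{\{|f|>N\}\cap[0,T]}|f|^2\,dt$, it suffices to show that $\phi(N):=\limsup_{T\to\infty}\frac1T\int_{\{|f|>N\}\cap[0,T]}|f|^2\,dt$ tends to $0$ as $N\to\infty$ --- a ``density'' (Banach) form of uniform integrability of $|f|^2$ --- since then any $N$ with $\phi(N)<\epsilon$ makes $f_N$ satisfy (\ref{sup}). I would argue the contrapositive: if $\phi(N)\ge\delta>0$ for every $N$, then I would construct a null set $S$ on which $f$ retains a positive proportion of its $L^2$-mass, contradicting (\ref{meas}). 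Because $\phi(N)\ge\delta$, the set of $T$ with $\frac1T\int_{\{|f|>N\}\cap[0,T]}|f|^2\,dt\ge\delta/2$ is unbounded for each $N$; so one may choose recursively $b_1<b_2<\cdots\to\infty$ with $\frac1{b_N}\int_{\{|f|>N\}\cap[0,b_N]}|f|^2\,dt\ge\delta/2$ and with the lacunarity condition $b_N\ge N\sum_{j<N}b_j$, and put $S:=\bigcup_{N\ge1}(\{|f|>N\}\cap[0,b_N])$. By Chebyshev's inequality and $I(T)\ll T$ one has $\mathrm{meas}(\{|f|>N\}\cap[0,T])\le I(T)/N^2\ll T/N^2$; splitting the union at the index $N$ with $b_{N-1}<T\le b_N$, the lacunarity makes the contribution to $\mathrm{meas}(S\cap[0,T])$ from indices $<N$ at most $\ll\sum_{j<N}b_j/j^2\ll b_{N-1}/N$ and that from indices $\ge N$ at most $\ll T\sum_{j\ge N}j^{-2}\ll T/N$, both $o(T)$, so $S$ is null. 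On the other hand $\frac1{b_N}\int_{S\cap[0,b_N]}|f|^2\,dt\ge\frac1{b_N}\int_{\{|f|>N\}\cap[0,b_N]}|f|^2\,dt\ge\delta/2$, and since $I(b_N)\ll b_N$ this forces $\limsup_{T}\int_{S\cap[0,T]}|f|^2\,dt/I(T)>0$ --- contradicting (\ref{meas}) for the null set $S$.

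The hard part is precisely this last step: extracting a density-sense uniform integrability of $|f|^2$ from the hypothesis that $f$ is not concentrated on a null set. The lacunary construction --- choosing the windows $[0,b_N]$ sparse enough (relative to the Chebyshev bound $\ll T/N^2$ for $\mathrm{meas}\{|f|>N\}$) that $S$ stays null, while keeping it dense enough along the sequence $b_N$ to carry a fixed fraction $\delta/2$ of the mass --- is the device that makes the contradiction run; the equivalence of (a) and (b) and the two triangle-inequality arguments are standard.
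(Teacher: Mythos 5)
Your proof is correct. The treatment of (a)$\Leftrightarrow$(b)$\Leftrightarrow$(c) is essentially the same as the paper's: the paper's choice $g=e^{-2i\theta_f}\chi_S$ in the (a)$\Rightarrow$(b) direction is the same function you write as $\chi_S\,\overline{f}^{2}/|f|^2$; the paper's (b)$\Rightarrow$(c) simply takes $f_{N,S}=0$, a slightly more economical version of your truncation choice; and the (c)$\Rightarrow$(b), (d)$\Rightarrow$(b) directions use the same triangle-inequality mechanism, with $I(T)\gg T$ absorbing the $\mathrm{meas}(S\cap[0,T])=o(T)$ contribution. The genuinely different step is (b)$\Rightarrow$(d). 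The paper passes through (c) and then asserts that $I(T)\ll T$ forces $f$ to be bounded outside some null set $S(f)$, gluing the approximation from (c) on $S(f)$ to $f$ itself on $S(f)^c$. That asserted lemma is in fact false in general: take, for instance, $|f|^2$ periodic of period $1$, equal to $n^2$ on a subinterval of each period of length $c/n^4$ with the subintervals pairwise disjoint over $n\ge 2$. Then $I(T)\asymp T$ and (b) holds (split at level $N^2$ and use that the density of $\{|f|^2>N^2\}$ is $\asymp N^{-3}$), yet $\{|f|>N\}$ has positive density for every $N$, so $f$ is bounded off no null set and the paper's decomposition is unavailable. Your argument sidesteps this by recasting (d) as the density-sense uniform integrability statement $\phi(N)\to 0$ and proving the contrapositive through the lacunary windows $[0,b_N]$: Chebyshev gives $\mathrm{meas}(\{|f|>N\}\cap[0,T])\ll T/N^2$, and the growth condition $b_N\ge N\sum_{j<N}b_j$ is precisely tuned so that the union $S=\bigcup_N(\{|f|>N\}\cap[0,b_N])$ is null while retaining mass $\ge\delta/2$ at each scale $b_N$, contradicting (b). This is more robust than the paper's route and repairs a gap in its write-up.
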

 
\begin{theorem}\label{zetacns}
Assume $\mu(1/2)<1/2k$, $1/2<\sigma<1$ and $\sigma\neq \sigma_k$. Then $\zeta^k(\sigma+it)$ is not CNS if and only if $\sigma_k<\sigma<1$.
\end{theorem}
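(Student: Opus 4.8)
The plan is to verify, in each of the two ranges of $\sigma$, the appropriate condition of Proposition~\ref{holder}. Recall from the introduction that, for fixed $1/2<\sigma<1$, the equivalences of (\ref{asy1})--(\ref{asy3}) together with (\ref{inverse}) make $\sigma_k<\sigma$ equivalent to $M_k(\sigma,T)\sim c_\sigma T$ with $c_\sigma=\sum_{n\ge1}d_k^2(n)n^{-2\sigma}<\infty$, whereas $\sigma\le\sigma_k$ forces $\limsup_{T\to\infty}M_k(\sigma,T)/T=\infty$. In the first case $f(t)=\zeta^k(\sigma+it)$ satisfies $\int_0^T|f|^2\,dt\asymp T$, so it suffices to establish condition~(d); in the second it suffices to produce a null set $S$ for which (\ref{meas}) fails.

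For the implication $\sigma_k<\sigma\Rightarrow f$ not CNS, take $f_N(t)=\sum_{n\le N}d_k(n)n^{-\sigma-it}$, which is bounded. Expanding $\tfrac1T\int_0^T|f-f_N|^2\,dt$, the $|f|^2$ term tends to $c_\sigma$ (this is precisely the hypothesis $\sigma_k<\sigma$), the $|f_N|^2$ term tends to $\sum_{n\le N}d_k^2(n)n^{-2\sigma}$ by the classical mean value theorem for Dirichlet polynomials, and the cross term is $\sum_{n\le N}d_k(n)n^{-\sigma}\cdot\tfrac1T\int_0^T\zeta^k(\sigma+it)n^{it}\,dt$. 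The heart of the matter is the Fourier--Bohr identity
\[
\frac1T\int_0^T\zeta^k(\sigma+it)\,n^{it}\,dt\ \longrightarrow\ d_k(n)\,n^{-\sigma}\qquad (T\to\infty)
\]
for each fixed $n$; granting it, the cross term tends to $\sum_{n\le N}d_k^2(n)n^{-2\sigma}$, the three contributions combine to $\sum_{n>N}d_k^2(n)n^{-2\sigma}$, and choosing $N$ large makes this $<\epsilon$, which is exactly condition~(d).

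To prove the Fourier--Bohr identity I would insert an approximate functional equation for $\zeta^k(\sigma+it)$: a main sum $\sum_{m\le x}d_k(m)m^{-\sigma-it}$, a dual sum carrying the factor $\chi(\sigma+it)^k\ll t^{-k(\sigma-1/2)}$, and an error term. Integrated against $n^{it}$, the diagonal $m=n$ of the main sum contributes $d_k(n)n^{-\sigma}T$, the off-diagonal is $o(T)$ provided $x$ is not taken too large relative to $T$, and the dual sum and error are $o(T)$. Controlling the error after averaging is where the subconvexity hypothesis enters essentially: $\mu(1/2)<1/2k$ and convexity of $\mu$ (with $\mu(1)=0$) give $\mu(\sigma)<(1-\sigma)/k$, hence $\zeta^k(\sigma+it)\ll t^{\,1-\sigma-\delta}$ for some $\delta>0$, and this growth bound dominates the tail of the approximate functional equation. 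I expect this to be the main obstacle, and the only point at which $\mu(1/2)<1/2k$, rather than $\sigma>\sigma_k$ alone, is used.

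For the implication $\sigma\le\sigma_k\Rightarrow f$ CNS (vacuous unless $\sigma_k>1/2$, which is not known to occur), use (\ref{inverse}) to pick $T_j\to\infty$ with $M_k(\sigma,T_j)\ge jT_j$, thinning so that also $M_k(\sigma,T_j)\ge2M_k(\sigma,T_{j-1})$ and $T_j\ge j\sum_{i<j}T_i$; let $V_j\to\infty$ slowly and set $S=\bigcup_j\bigl(\{t:|\zeta(\sigma+it)|>V_j\}\cap(T_{j-1},T_j]\bigr)$. Chebyshev's inequality and the classical bound $M_1(\sigma,T)\ll T$ (valid since $\sigma>1/2=\sigma_1$) give $\mathrm{meas}\{t\le T:|\zeta(\sigma+it)|>V\}\ll T/V^2$, and the growth conditions then give $|S|=0$. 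On the other hand $\int_{t\le T_j,\ |\zeta(\sigma+it)|\le V_j}|\zeta|^{2k}\,dt\le V_j^{2k-2}M_1(\sigma,T_j)=o(M_k(\sigma,T_j))$, so $\{|\zeta|>V_j\}$ carries $(1-o(1))M_k(\sigma,T_j)$ of the mass on $[0,T_j]$, and discarding the at most $M_k(\sigma,T_{j-1})\le\tfrac12M_k(\sigma,T_j)$ coming from $[0,T_{j-1}]$ leaves $\int_{S\cap[0,T_j]}|\zeta|^{2k}\,dt\ge\tfrac13M_k(\sigma,T_j)$ for large $j$. Thus $\limsup_T\int_{S\cap[0,T]}|\zeta|^{2k}\,dt/M_k(\sigma,T)\ge\tfrac13$, so (\ref{meas}) fails and $f$ is CNS; this direction is routine once (\ref{inverse}) is available.
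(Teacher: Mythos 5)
Your proposal is correct and takes a genuinely different, and in places more elementary, route than the paper.

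The paper routes everything through Proposition~\ref{zo}: it checks that $f=\zeta^k$ satisfies all the hypotheses of that proposition (matching the Fourier coefficients of both $\zeta^k$ and $\zeta^{2k}$ with those of a Besicovitch function $g$, establishing $e^{i\theta_f}\in U$ via Proposition~\ref{prop4}, etc.), and then invokes the equivalence \emph{$f\in B^2 \iff f$ not CNS} together with \emph{$\zeta^k\in B^2\iff\sigma_k<\sigma$}. This requires $\mu(1/2)<1/2k$ precisely to control the coefficients of $\zeta^{2k}$, and pulls in the whole machinery of Lemma~\ref{ugroup} and Proposition~\ref{prop4}. You bypass Proposition~\ref{zo} entirely and verify the two directions of Proposition~\ref{holder} by hand. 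Your forward direction is essentially the paper's proof of Proposition~\ref{sig}: the Fourier--Bohr identity you flag as the obstacle is exactly Proposition~\ref{lem1}, which the paper proves by a clean contour shift to $\Re s=1+\delta$ rather than via the approximate functional equation you sketch (the contour argument is simpler and you should just cite it); note it only needs $\mu(1/2)<1/k$, not $1/2k$. Your converse direction is the more interesting departure: the explicit null set $S=\bigcup_j\{|\zeta(\sigma+it)|>V_j\}\cap(T_{j-1},T_j]$ built from a sparse sequence where $M_k(\sigma,T_j)\geq jT_j$, combined with Chebyshev and the unconditional $M_1(\sigma,T)\ll T$ (valid for $\sigma>1/2$), shows directly that $S$ is null but carries $(1-o(1))M_k(\sigma,T_j)$ of the mass, so (\ref{meas}) fails. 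This is short, elementary, and --- notably --- uses \emph{no} subconvexity hypothesis at all; your proof thus in fact establishes the implication $\sigma\le\sigma_k \Rightarrow \zeta^k$ CNS unconditionally, slightly strengthening the theorem. The trade-off is that the paper's heavier Proposition~\ref{zo}/Proposition~\ref{prop4} framework is reused to prove Theorem~\ref{thzeta}, which your argument does not touch; your approach buys brevity and a weaker hypothesis for Theorem~\ref{zetacns} specifically, while the paper's buys a reusable abstract mechanism. Small point of hygiene in your converse: take $V_j$ increasing, in which case $\{|\zeta|>V_j\}\cap[0,T_j]\subseteq S$ and the discard-the-initial-segment step (and the condition $M_k(\sigma,T_j)\ge 2M_k(\sigma,T_{j-1})$) becomes unnecessary, giving $\limsup\ge 1$ rather than $\ge 1/3$.
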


Thus, Theorem \ref{zetacns} shows that if $\mu(1/2)<1/2k$ and $\sigma_k>1/2$ then the density 
\begin{eqnarray}\label{density}
\frac{|\zeta(\sigma+it)|^{2k}}{M_k(\sigma,T)}\hspace{1cm}(t\in[0,T],k\in\mathbb{N})
\end{eqnarray}
must undergo a critical transition at the point $\sigma_k$ in the sense that (\ref{meas}) holds with $f(t)=|\zeta(\sigma+it)|^{k}$ for $\sigma_k<\sigma<1$ and fails for $1/2<\sigma< \sigma_k$. Since we know that $\mu(1/2)<1/6$ and $1/2\leq \sigma_3\leq 7/12$, this suggests an alternative way to try to rule out the possibility that $\sigma_3>1/2$. Theorem \ref{zetacns} arises as a special case of the more general statement Proposition \ref{zo} given in Section \ref{general}, which captures the influence of a broader class of locally integrable functions being concentrated on a null set. It is here that functional analysis plays a key role.

One explanation for the apparent disparity between bounds for $\mu(1/2)$ and bounds for $\sigma_k$, and indeed our ability to compute the moments, is that such atypical large values of $|\zeta(\sigma+it)|^{k}$ (if they exist) must occur ``near'' the imaginary ordinates of zeros away from the critical line (if they exist). This is clear from, for example, the proof of Proposition \ref{prop4} below. Yet, the available estimates for the density of zeros decay (as a function of $\sigma$) too slowly for us to draw the inverse or converse conclusions from them alone. Here several different types of estimate for the number $N(\sigma,T)$ of zeros $\rho=\beta+i\gamma$ with $\beta\geq\sigma$ and $|\gamma|\leq T$ are available (\cite{Titch}, Section 9.15), but these typically involve functions $\alpha(\sigma)$ and $\beta(\sigma)$ such that 
\begin{eqnarray}\label{zbound}
N(\sigma,T)\ll T^{\alpha(\sigma)}(\log T)^{\beta(\sigma)}
\end{eqnarray} in which $\alpha(\sigma)$ decays slowly in the range $\sigma>1/2$ with $\alpha(1/2)=1$. Nonetheless, these zero density estimates are sufficient to establish Proposition \ref{prop4} below. Here we set $\theta_{P_N}=\arg P_N$ where
\begin{eqnarray}\label{Pdef}
P_N(\sigma+it)=\exp\left(\sum_{2\leq n\leq N^2}\frac{\Lambda_N(n)}{n^{\sigma+it}\log n}\right),
\end{eqnarray}
\begin{eqnarray}\label{kjj34322}
\Lambda_N(n)&=&\left\{
                \begin{array}{ll}
                  \Lambda(n) \hspace{4cm}(n\leq N)  \\
               \Lambda(n)\left(2-\frac{\log n}{\log N}\right)\hspace{1.9cm}(\textrm{otherwise})
                \end{array}
              \right.\nonumber
\end{eqnarray}
in which $\Lambda(n)$ is the Von-Mangoldt function
\begin{eqnarray}
\Lambda(n)&=&\left\{
                \begin{array}{ll}
                  \log p \hspace{1.3cm}(n=p^m, p \textrm{ prime}, m\in\mathbb{N})  \\
               0\hspace{1.9cm}(\textrm{otherwise}).
                \end{array}
              \right.\nonumber
\end{eqnarray}\\
We also set  $Z_N=\zeta/P_N$ and
\begin{eqnarray}
\theta_{Z_N}=\arg Z_N\nonumber
\end{eqnarray}
so $\theta_{Z_N}=\theta_{\zeta}-\theta_{P_N}$ $\pmod {(-\pi,\pi]}$ and 
\begin{eqnarray}
|\zeta|^{2k} e^{2ik\theta_{Z_N}}=|P_N|^{2k}Z_N^{2k}=\zeta^{2k} e^{-2ik\theta_{P_N}}\hspace{1cm}(k\in\mathbb{N}).\nonumber
\end{eqnarray}

\begin{proposition}\label{prop4} For every $1/2<\sigma<1$ and $\epsilon>0$
\begin{eqnarray}
\textrm{\emph{Pr}}\left(\left|\theta_{ Z_N}(\sigma+it)\pmod {(-\pi,\pi]}\right|\geq \epsilon\right)=0\hspace{1cm}(N>N(\epsilon)). \nonumber
\end{eqnarray}
\end{proposition}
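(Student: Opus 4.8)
The plan is to show that $P_N$ approximates $\zeta$ so well in the strip $1/2<\sigma<1$ that $Z_N(\sigma+it)$ is close to $1$ — hence $\arg Z_N$ close to $0$ — for all but a sparse set of $t$, with the sparseness quantified by the zero-density bounds (\ref{zbound}). First I would pass from the argument to the full logarithm: writing $\log P_N(\sigma+it)=\sum_{2\le n\le N^2}\tfrac{\Lambda_N(n)}{n^{\sigma+it}\log n}$ and $\log Z_N=\log\zeta-\log P_N$ (a well-defined analytic function off the zeros of $\zeta$), one has $\theta_{Z_N}(\sigma+it)\equiv\operatorname{Im}\log Z_N(\sigma+it)\pmod{2\pi}$, and since reduction $\bmod\,(-\pi,\pi]$ never increases absolute value, $|\theta_{Z_N}(\sigma+it)\bmod(-\pi,\pi]|\le|\operatorname{Im}\log Z_N(\sigma+it)|$. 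So it suffices to bound $\operatorname{Im}\log Z_N(\sigma+it)$, off a null set, by a quantity tending to $0$ with $N$. It is also convenient to note the pointwise inequality $|Z_N(\sigma+it)-1|\ge|\sin(\arg Z_N(\sigma+it))|$, which for $\epsilon\le\pi/2$ gives $\{\,|\theta_{Z_N}\bmod(-\pi,\pi]|\ge\epsilon\,\}\subseteq\{\,|Z_N-1|^2\ge\sin^2\epsilon\,\}$.

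Next I would establish the mean-square estimate
\begin{eqnarray}\label{propMS}
\limsup_{T\to\infty}\frac1T\int_{0}^{T}\bigl|Z_N(\sigma+it)-1\bigr|^{2}\,dt=\eta(N),\qquad \eta(N)\to0\ (N\to\infty).\nonumber
\end{eqnarray}
Since $\log P_N$ is a finite Dirichlet polynomial, $|P_N(\sigma+it)|$ is bounded above and below for fixed $N$, so $|Z_N(\sigma+it)|\ll_N|\zeta(\sigma+it)|$ and Ingham's fourth-moment bound $M_2(\sigma,T)\ll T$ (\cite{I}) gives $\tfrac1T\int_0^T|Z_N|^{4}\ll_N1$, i.e. uniform integrability of $|Z_N|^{2}$. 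By the value-distribution theory of $\zeta(s)$ for $1/2<\sigma<1$ (Titchmarsh \cite{Titch}, Ch.~11 — whose proof uses precisely estimates of the type (\ref{zbound}) to control $\log|\zeta|$ near zeros), $Z_N(\sigma+it)$ has a limiting value distribution, namely that of $\prod_{p>N}(1-X_p p^{-\sigma})^{-1}$ times a bounded factor supported on primes below $N^2$; combined with uniform integrability this yields $\tfrac1T\int_0^T Z_N\to1$ and $\tfrac1T\int_0^T|Z_N|^{2}\to1+\eta(N)$ with $\eta(N)=\sum_{m>1}|a_m(N)|^{2}m^{-2\sigma}$, where the Dirichlet coefficients $a_m(N)$ of $Z_N=\zeta/P_N$ vanish for $1<m\le N$; hence $\eta(N)\ll\sum_{m>N}m^{-2\sigma+o(1)}\to0$, whence (\ref{propMS}).

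Finally, (\ref{propMS}) and Chebyshev give $\Pr\bigl(|\theta_{Z_N}(\sigma+it)\bmod(-\pi,\pi]|\ge\epsilon\bigr)\le\eta(N)/\sin^{2}(\min(\epsilon,\pi/2))$, which tends to $0$ as $N\to\infty$; to upgrade this to the assertion that the density is \emph{exactly} $0$ for $N>N(\epsilon)$ one must localise the exceptional set. By the explicit formula applied with the Fej\'er weight $\Lambda_N(n)=\Lambda(n)\tfrac1{\log N}\int_{\log N}^{2\log N}\mathbf 1_{\{n\le e^u\}}\,du$ (so that the contribution of a zero $\beta+i\gamma$ to $\log\zeta(\sigma+it)-\log P_N(\sigma+it)$ carries a factor $N^{-2(\sigma-\beta)}(1+|t-\gamma|)^{-2}$ when $\beta\le\sigma$), the set on which $|\operatorname{Im}\log Z_N(\sigma+it)|\ge\epsilon$ is contained, up to a null set, in the $N^{1-\sigma}$-neighbourhood of the ordinates of zeros $\beta+i\gamma$ with $\beta>\sigma-o_N(1)$; fixing $\sigma'\in(1/2,\sigma)$, the estimate (\ref{zbound}) with $\alpha(\sigma')<1$ gives $N(\sigma',T)=o(T)$, so this neighbourhood has density $0$. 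The main obstacle is exactly this last step: one has to show, uniformly in $t$ outside that null set, that the tails of the zero-sum together with the pole term of the explicit formula stay below $\epsilon$ — delicate because the individual terms grow with $N$ when $\sigma<1$, and it is here that the short-interval form of (\ref{zbound}) and the power saving from $\Lambda_N$ are indispensable. The mean-square step and the reductions in the first two paragraphs are, by contrast, routine.
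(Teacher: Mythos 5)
Your proof has a genuine gap, which you yourself flag but do not close. The mean-square/Chebyshev argument in your second paragraph only yields
$\Pr\bigl(|\theta_{Z_N}\bmod(-\pi,\pi]|\ge\epsilon\bigr)\le\eta(N)/\sin^2(\min(\epsilon,\pi/2))$,
i.e.\ a density tending to $0$ as $N\to\infty$. The proposition requires the density to be \emph{exactly} zero for every $N>N(\epsilon)$, which a Chebyshev bound cannot give: a positive-measure set on which $|Z_N-1|$ is moderately large is perfectly compatible with a small mean square. So the entire mean-square apparatus (including the claimed second-moment formula $1+\eta(N)$ for $Z_N$, which you assert but do not actually verify) turns out to be a detour that does not finish the argument.

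The content of the proposition lives entirely in your third paragraph, where you sketch the explicit-formula-plus-zero-density localisation and then state that ``the main obstacle is exactly this last step'' without carrying it out. That is precisely the step the paper does carry out, and it does so quite concretely: it invokes Gonek's formula $\log Z_N(s)=\sum_\rho F_2\bigl((s-\rho)\log N\bigr)-F_2\bigl((s-1)\log N\bigr)+O(\cdot)$, observes that the terms with $\beta<\sigma$ contribute $o(1)$ uniformly as $N\to\infty$ by absolute convergence, and for the remaining terms (with $\beta\ge\sigma$) splits the sum at $|\gamma-t|\le\Delta$ versus $|\gamma-t|>\Delta$ with $\Delta=T^{A}$. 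The far-away piece is controlled by Gonek's bound $\sum_{|\gamma-t|>\Delta}(\gamma-t)^{-2}\ll(\log\Delta+\log t)/\Delta$, and the near piece is nonempty only when $t$ lies within $T^{A}$ of a zero with $\beta\ge\sigma$; since the number of such zeros is $\ll T^{\alpha}$ with $\alpha<1$, choosing $A=1-\alpha-\delta$ makes the union of those neighbourhoods $\ll T^{1-\delta}$, hence of natural density zero. Outside that set one gets $\log Z_N\ll\epsilon$ for $N(\epsilon)<N\ll T^{1-\alpha-\delta}$, which gives the proposition. Also note a detail in your sketch that does not match: the exceptional neighbourhoods should have width $T^{A}$ (chosen relative to the zero count, so that their union stays $o(T)$), not $N^{1-\sigma}$ as you write; the latter would not combine cleanly with (\ref{zbound}) to give density zero without further argument. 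In short: your first two paragraphs are a correct but unnecessary reduction; the third paragraph identifies the right mechanism but leaves the crucial uniform tail estimate as an acknowledged open obstacle, and that estimate is exactly what the paper's proof supplies.
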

Proposition \ref{prop4} plays a key role in our proofs because it implies that $f=e^{ik\theta_{\zeta}}$ satisfies (\ref{sup}) with $f_N=e^{ik\theta_{P_N}}$ for $1/2<\sigma<1$ and $k\in\mathbb{N}$.

Our second theorem demonstrates a ``zero-one'' law for the density (\ref{density}) in the range $1/2<\sigma<1$.

\begin{theorem}\label{thzeta}Assume $\mu(1/2)<1/2k$. For every $\epsilon>0$
\begin{eqnarray}
\limsup_{T\rightarrow\infty}\frac{\int_{0}^T|\zeta(\sigma+it)|^{2k}\sin^2(k\theta_{Z_N}(\sigma+it))dt}{M_k(\sigma,T)}
\left\{
                \begin{array}{ll}
               =1/2\hspace{0.4cm}(N\in\mathbb{N},1/2<\sigma< \sigma_k)\\
               <\epsilon \hspace{0.78cm}\left(N>N(\epsilon), \sigma_k<\sigma<1\right).
                \end{array}
              \right.\nonumber
\end{eqnarray}
\end{theorem}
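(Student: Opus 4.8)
The plan is to reduce the statement to one mean value. Put
\[
\mathcal{I}_N(T)=\frac{1}{M_k(\sigma,T)}\int_0^T\zeta^{2k}(\sigma+it)\,e^{-2ik\theta_{P_N}(\sigma+it)}\,dt .
\]
Writing $\sin^2(k\theta_{Z_N})=\tfrac12\bigl(1-\cos(2k\theta_{Z_N})\bigr)$ and using the identity $|\zeta|^{2k}e^{2ik\theta_{Z_N}}=\zeta^{2k}e^{-2ik\theta_{P_N}}$ displayed just before Proposition \ref{prop4}, one gets for every $N$ and every $1/2<\sigma<1$ the exact identity
\[
\frac{\int_0^T|\zeta(\sigma+it)|^{2k}\sin^2\!\bigl(k\theta_{Z_N}(\sigma+it)\bigr)\,dt}{M_k(\sigma,T)}=\frac12-\frac12\,\mathrm{Re}\,\mathcal{I}_N(T).
\]
Since $|\mathcal{I}_N(T)|\le1$, taking $\limsup_T$ shows the theorem is equivalent to: $\liminf_{T\to\infty}\mathrm{Re}\,\mathcal{I}_N(T)>1-2\epsilon$ for $N>N(\epsilon)$ when $\sigma_k<\sigma<1$, and $\liminf_{T\to\infty}\mathrm{Re}\,\mathcal{I}_N(T)=0$ for every $N\in\mathbb{N}$ when $1/2<\sigma\le\sigma_k$.

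For $\sigma_k<\sigma<1$ I would argue directly, without $\mathcal{I}_N$. By Theorem \ref{zetacns} the function $\zeta^k(\sigma+it)$ is not CNS, and since only $|f|^2$ enters condition (b) of Proposition \ref{holder}, the real-valued $f=|\zeta(\sigma+it)|^k$ is not CNS either, so part (a) of Proposition \ref{holder} applies to it. Taking there the bounded nonnegative $g=\sin^2(k\theta_{Z_N})$ gives
\[
\limsup_{T\to\infty}\frac{\int_0^T|\zeta(\sigma+it)|^{2k}\sin^2\!\bigl(k\theta_{Z_N}(\sigma+it)\bigr)\,dt}{M_k(\sigma,T)}\le\sup\{x:\mathrm{Pr}(\sin^2(k\theta_{Z_N})\ge x)>0\}.
\]
By Proposition \ref{prop4}, for $N>N(\delta)$ the set on which the reduction of $\theta_{Z_N}$ modulo $(-\pi,\pi]$ has absolute value at least $\delta$ is null, and off that set $\sin^2(k\theta_{Z_N})\le k^2\delta^2$; hence the right-hand side is $\le k^2\delta^2$, which is $<\epsilon$ once $\delta=\delta(\epsilon)$ is small. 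That is the asserted bound in the range $\sigma_k<\sigma<1$.

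The content is in the range $1/2<\sigma\le\sigma_k$, where $|\zeta|^k$ is CNS by Theorem \ref{zetacns} and part (a) of Proposition \ref{holder} is unavailable. Here I would prove, for every fixed $N$ and every $1/2<\sigma<1$, the twisted second‑moment asymptotic
\[
\int_0^T\zeta^{2k}(\sigma+it)\,e^{-2ik\theta_{P_N}(\sigma+it)}\,dt=c_N(\sigma)\,T+o(T)\qquad(T\to\infty),
\]
with $c_N(\sigma)$ a nonnegative real. This is where the hypothesis $\mu(1/2)<1/2k$ is used. One inserts a smoothed approximate functional equation for $\zeta^{2k}$: the dual term carries the factor $|\chi(\sigma+it)|^{2k}\asymp t^{k(1-2\sigma)}$ (with $\zeta(s)=\chi(s)\zeta(1-s)$), which decays for $\sigma>1/2$ and, after a stationary‑phase treatment, contributes $o(T)$, while $\mu(1/2)<1/2k$ permits the principal Dirichlet series of $\zeta^{2k}$ to be truncated at length $T^{1-\eta}$ for some $\eta=\eta(\sigma)>0$. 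Crucially $\zeta^{2k}(\sigma+it)$ is a single Dirichlet series — unlike $|\zeta|^{2k}=\zeta^k(\sigma+it)\zeta^k(\sigma-it)$, which would produce a genuine $d_k$‑correlation — so when it is multiplied by the fixed ($t$‑independent) absolutely convergent Dirichlet series $e^{-2ik\theta_{P_N}}=P_N^{-k}\overline{P_N}^{k}$ and integrated, orthogonality of the $n^{-it}$ leaves only a diagonal main term $c_N(\sigma)T$ and an off‑diagonal $\ll_N T^{(1-\eta)(1-\sigma)+\epsilon}=o(T)$. The constant factors as an Euler product, equal to $\prod_{p\le N}\sum_{j\ge0}d_k(p^j)^2p^{-2j\sigma}$ for the sharp cutoff; the $\Lambda_N$‑smoothing perturbs only the finitely many local factors with $N<p\le N^2$, which one checks remain nonnegative. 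Granting this, $\mathrm{Re}\,\mathcal{I}_N(T)=\bigl(c_N(\sigma)+o(1)\bigr)T/M_k(\sigma,T)$; since $c_N(\sigma)\ge0$ and $M_k(\sigma,T)\gg T$ this is $\ge-o(1)$ for all large $T$, while along any sequence $T_j\to\infty$ with $M_k(\sigma,T_j)/T_j\to\infty$ — which exists by (\ref{inverse}), stated there also at $\sigma=\sigma_k$ — one has $\mathrm{Re}\,\mathcal{I}_N(T_j)\to0$; hence $\liminf_T\mathrm{Re}\,\mathcal{I}_N(T)=0$, as required.

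The main obstacle is this twisted‑moment asymptotic together with the sign of $c_N(\sigma)$: it is the only place arithmetic input beyond Theorem \ref{zetacns} and Proposition \ref{prop4} is needed, and it is where $\mu(1/2)<1/2k$ does its work, forcing the off‑diagonal to be $o(T)$ despite $d_{2k}$ growing. The nonnegativity of $c_N(\sigma)$ is precisely what pins the $\limsup$ to $1/2$ rather than something larger: although the $|\zeta|^{2k}$‑mass concentrates on a null set when $\sigma\le\sigma_k$, the phases $\theta_{Z_N}$ must be spread out enough there to keep $\int_0^T\zeta^{2k}e^{-2ik\theta_{P_N}}\,dt$ linear in $T$. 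I expect this step to fit as a special case of Proposition \ref{zo}, with the same arithmetic content.
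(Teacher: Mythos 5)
Your handling of $\sigma_k<\sigma<1$ is the paper's own: $\zeta^k$ is not CNS (Theorem \ref{zetacns}), then apply Proposition \ref{holder}(a) with $g=\sin^2(k\theta_{Z_N})$ and Proposition \ref{prop4}. The genuine issue is the range $1/2<\sigma\leq\sigma_k$. There you propose to prove the twisted second-moment asymptotic $\int_0^T\zeta^{2k}e^{-2ik\theta_{P_N}}\,dt=c_N(\sigma)T+o(T)$ via an approximate functional equation, stationary phase and an off-diagonal estimate, but none of this is carried out (``after a stationary-phase treatment\ldots one checks\ldots''), so as written it is a sketch, not a proof. More to the point, it is a re-derivation, at considerable cost, of something the paper's machinery hands you for free: $\mu(1/2)<1/2k$ fed into Proposition \ref{lem1} with $2k$ in place of $k$ gives $\zeta^{2k}\in A^2$, and since $e^{2ik\theta_{P_N}}$ is a Bohr function, multiplication by it is bounded on $A^2$, so the mean value $\frac{1}{T}\int_0^T\zeta^{2k}e^{-2ik\theta_{P_N}}\,dt$ converges to $\langle \zeta^{2k},e^{2ik\theta_{P_N}}\rangle$ with no AFE, no stationary phase and no off-diagonal analysis. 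Combined with (\ref{inverse}), that and the identity $1-\cos 2x=2\sin^2 x$ finish the case. Your closing suspicion that the step ``fits as a special case of Proposition \ref{zo}'' is essentially right; you should have leaned on it rather than rebuilding the arithmetic input.

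One point you raise that the paper glosses over: the sign of $c_N(\sigma)=\Re\langle\zeta^{2k},e^{2ik\theta_{P_N}}\rangle$. The paper's step $\liminf_T\bigl(c/B(T)\bigr)=c\cdot\bigl(\limsup_T B(T)\bigr)^{-1}$ is only correct when $c\geq 0$; if $c<0$ and $\liminf_T M_k(\sigma,T)/T$ is finite, the $\liminf$ would be strictly negative and the $\limsup$ in the theorem would exceed $1/2$. So nonnegativity genuinely matters for the claimed equality. You identify this, which is to your credit, but your proposed verification (Euler-product positivity of local factors under the $\Lambda_N$-smoothing) is also not carried out, so the point remains open in your write-up just as it is implicit and unaddressed in the paper's.
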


One side of this law states that, if $\sigma_k>1/2$, then for every $1/2<\sigma< \sigma_k$ and $N\in\mathbb{N}$ the distribution of the angles 
$k\theta_{ Z_N}(\sigma+it)$ $\pmod {(-\pi,\pi]}$ with respect to the density (\ref{density}) is consistent with that of a continuous random variable uniformly distributed on $(-\pi,\pi]$ with respect to the density (\ref{density}). 
On the other hand, the other side of this law states that the expected value of those angles with respect to the density (\ref{density}) converges to $0$ as $N\rightarrow\infty$  for every $\sigma_k<\sigma< 1$. Thus we obtain a highly structured consequence of the event  $\sigma_k>1/2$ conditionally on $\mu(1/2)<1/2k$. In light of Proposition \ref{prop4} above, Theorem \ref{thzeta} lends support to the hypothesis that it is more natural for the events $\mu(1/2)<1/2k$ and $\sigma_k>1/2$ to be mutually exclusive, i.e. that $\mu(1/2)<1/2k\Rightarrow \sigma_k=1/2$, which would be a favourable outcome as it implies that $\sigma_3=1/2$.

The proofs of theorems \ref{zetacns} and \ref{thzeta} are postponed until Section \ref{zeta} because several details of the arguments are quite general and the functional analysis in Section \ref{general} handles them in a clearer way. The proofs of propositions \ref{holder} and \ref{prop4} are given in Section \ref{props}.

\section{Preliminaries. Almost-periodicity}\label{general}
Throughout this paper we write $\zeta=\zeta(\sigma+it)$ for any fixed $\sigma\in(1/2,1)$ unless otherwise specified, and $f,g\in L_{2,\textrm{loc}}(\mathbb{R}^{+})$. Writing 
\begin{eqnarray} \label{ip}
\langle f, g\rangle=\lim_{T\rightarrow\infty}\frac{1}{T}\int_{0}^{T}f(t)\overline{g(t)}dt
\end{eqnarray}
and $\|f\|=\sqrt{\langle f,f\rangle}$ when the limits exists, it is clear from (\ref{sup}) and the reverse triangle inequality that if the $\|f_N\|$ exist then $\|f\|=\lim_{N\rightarrow \infty}\|f_{N}\|$. For example, this is the case when the $f_{N}$ in  (\ref{sup}) are Bohr almost-periodic functions, that is, the $f_N$ have a (necessarily bounded and continuous) uniformly convergent generalised Fourier series
\begin{eqnarray}\label{Bohr}
g= \sum_{\lambda \in \mathbb{R}} c_{\lambda} e_{\lambda}
\end{eqnarray}
where $e_{\lambda}(t)=e^{i\lambda t}$, in which case the Fourier coefficients $c_{\lambda}= \langle g,e_{\lambda}\rangle$ are necessarily square-summable and $\|f_N\|^2=\sum_{\lambda\in\mathbb{R}}|\langle f_N,e_{\lambda}\rangle|^2$. In this case, the $f$ in (\ref{sup}) belongs to the closure of the space of Bohr functions in the seminorm $\|\cdot\|$. In other words, the equivalence class $\phi+f$ for $\|\phi\|=0$ belongs to the Hilbert space $B^2=B^2(\mathbb{R}^{+})$  of Besicovitch almost-periodic function on $\mathbb{R}^{+}$, in which case we write $f_N\rightarrow f$ in $B^2$ as $N\rightarrow\infty$. For $f,g\in B^2$ the inner product  (\ref{ip}) exists and the Parseval relation\footnote{The coefficients $\langle f,e_{\lambda}\rangle$ are necessarily non-zero for at most countably many $\lambda\in\mathbb{R}$.}
\begin{eqnarray}\label{parseval}
\langle f,g\rangle=\sum_{\lambda\in\mathbb{R}}\langle f,e_{\lambda}\rangle \overline{\langle g,e_{\lambda}\rangle}
\end{eqnarray}
holds. Conversely, if $f\in B^2$ then for instance we may take $f_N$ to be a partial sum
\begin{eqnarray}\label{partial}
f_{N}= \sum_{n\leq N} \langle f,e_{\lambda_n}\rangle e_{\lambda_n}
\end{eqnarray}
and verify (\ref{sup}) for this sequence using Bessel's inequality. Thus, if $f\in B^2$ then $f$ is not CNS. Reasoning along similar lines, we have the following pointwise equivalence. 

\begin{proposition}\label{sig}For each $k\in\mathbb{N}$, $\sigma_k=1/2\Leftrightarrow\zeta^k\in B^2$.
\end{proposition}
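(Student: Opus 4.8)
The plan is to recognise $\zeta^k(\sigma+it)$, for a fixed $\sigma\in(1/2,1)$, as the limit in the seminorm $\|\cdot\|$ of its Dirichlet polynomial partial sums, and to read the hypothesis $\sigma_k=1/2$ off the equivalence of (\ref{asy1})--(\ref{asy3}). Here ``$\zeta^k\in B^2$'' is understood as $\zeta^k(\sigma+it)\in B^2(\mathbb{R}^{+})$ for every fixed $\sigma\in(1/2,1)$; and since, by (\ref{asy2}) and the definition of $\sigma_k$, the assertion $\sigma_k=1/2$ is exactly that $M_k(\sigma,T)\ll_k T$ holds throughout $(1/2,1)$, it is enough to prove, for each such $\sigma$, the equivalence $M_k(\sigma,T)\ll_k T\Longleftrightarrow\zeta^k(\sigma+it)\in B^2$. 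For $(\Leftarrow)$ no arithmetic is required: if $\zeta^k(\sigma+it)\in B^2$ then the mean $\|\zeta^k\|^2=\lim_{T\to\infty}M_k(\sigma,T)/T$ exists and is finite, whence $M_k(\sigma,T)\ll_k T$; as this holds for every $\sigma\in(1/2,1)$, and $\sigma_k\ge 1/2$ by construction, we obtain $\sigma_k=1/2$.

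For $(\Rightarrow)$, assume $\sigma_k=1/2$ and fix $\sigma\in(1/2,1)$. Put $S=\sum_{n\ge1}d_k^2(n)n^{-2\sigma}$ and $D_N(t)=\sum_{n\le N}d_k(n)n^{-\sigma}e_{-\log n}(t)$, a trigonometric (hence Bohr almost-periodic) polynomial, noting that $\|D_N\|^2=\sum_{n\le N}d_k^2(n)n^{-2\sigma}$ because the frequencies $-\log n$ are distinct and the $e_{\lambda}$ are orthonormal for $\langle\,\cdot\,,\,\cdot\,\rangle$. The hypothesis yields, through the equivalence of (\ref{asy1})--(\ref{asy3}) (Titchmarsh \cite{Titch}, Section 7.9), both the asymptotic (\ref{asy3}) at $\sigma$ --- that is, $\|\zeta^k\|^2=S$ --- and the evaluation $\langle\zeta^k,e_{\lambda}\rangle=d_k(n)n^{-\sigma}$ when $\lambda=-\log n$ ($n\in\mathbb{N}$) and $\langle\zeta^k,e_{\lambda}\rangle=0$ otherwise. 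Granting these, $\langle\zeta^k,D_N\rangle=\sum_{n\le N}d_k^2(n)n^{-2\sigma}$, so
\begin{eqnarray}
\|\zeta^k-D_N\|^2=\|\zeta^k\|^2-2\,\mathrm{Re}\,\langle\zeta^k,D_N\rangle+\|D_N\|^2=S-\sum_{n\le N}d_k^2(n)n^{-2\sigma}\to 0\nonumber
\end{eqnarray}
as $N\to\infty$. Therefore $\zeta^k(\sigma+it)$ lies in the $\|\cdot\|$-closure of the Bohr almost-periodic functions, i.e. $\zeta^k(\sigma+it)\in B^2$ --- equivalently, $\zeta^k$ satisfies (\ref{sup}) with $f_N=D_N$.

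The one substantial ingredient is the classical input just used: that $\sigma_k=1/2$ forces the mean-square convergence $\|\zeta^k-D_N\|\to0$, from which both the asymptotic and the coefficient evaluation follow (the latter because $\langle\,\cdot\,,e_{\lambda}\rangle$ is $\|\cdot\|$-continuous by Cauchy--Schwarz). This is the substance of the proof of (\ref{asy3}): one approximates $\zeta$ by a truncated Dirichlet series $\sum_{m\le y}m^{-\sigma-it}$ with $y$ a suitable power of $T$, raises to the $k$-th power, and bounds the resulting error in mean square by the mean value theorem for Dirichlet polynomials together with H\"older's inequality against the lower moments $M_j(\sigma,T)\le M_k(\sigma,T)^{j/k}T^{1-j/k}\ll_k T$ ($1\le j\le k$), the case $k=1$ being elementary. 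The unconditional approximations $e^{ik\theta_{P_N}}$ supplied by Proposition \ref{prop4} do not seem to help here, since $\zeta^k=|\zeta|^k e^{ik\theta_\zeta}$ and $B^2$ is not closed under multiplication by the unbounded factor $|\zeta|^k$.

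I therefore expect this classical approximation step to be the main obstacle, and it is precisely where the hypothesis $\sigma_k=1/2$ --- rather than merely $\mu(1/2)<1/2k$ --- does real work: the truncation error is absorbed exactly by the uniform bounds $M_j(\sigma,T)\ll_k T$ for all $j\le k$. Everything else is formal: the Parseval and Bessel relations in $B^2$, the definition (\ref{ip}) of $\|\cdot\|$, and the fact that $B^2$ is by construction the $\|\cdot\|$-closure of the Bohr almost-periodic functions.
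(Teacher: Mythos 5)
Your proof is correct and follows essentially the same plan as the paper: the easy direction $\zeta^k\in B^2\Rightarrow\sigma_k=1/2$ comes from the existence of the mean square, and the harder direction comes from Bessel's inequality applied to the Dirichlet polynomial partial sums $D_N$, once one knows the Fourier coefficients $\langle\zeta^k,e_{\lambda}\rangle$ and the mean value $\|\zeta^k\|^2$. The computation $\|\zeta^k-D_N\|^2=\|\zeta^k\|^2-\|D_N\|^2\to 0$ is identical to (\ref{beseq}) in the paper.

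The one genuine divergence is how the Fourier coefficients are obtained. The paper invokes Proposition \ref{lem1}, a contour-integration argument that requires only $\mu(1/2)<1/k$; this is available because $\sigma_k=1/2$ gives $\mu(1/2)<1/2k$ via (\ref{imp1}). You instead propose to re-derive the coefficient evaluation (and the mean value) from the classical truncation argument behind (\ref{asy3}) — approximation of $\zeta^k$ by short Dirichlet polynomials, with mean-square error bounded via the mean value theorem for Dirichlet polynomials and H\"older against lower moments. Both routes are valid; yours is slightly more self-contained in an elementary sense, while the paper's makes explicit use of the machinery (Proposition \ref{lem1}) it has already set up and will reuse. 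One small presentational remark: you first cite (\ref{asy3}) and the coefficient evaluation as consequences of the equivalence (\ref{asy1})--(\ref{asy3}) and only afterwards explain that the coefficient evaluation is really extracted from the \emph{proof} of (\ref{asy3}); reordering so that the truncation argument comes first and the Fourier coefficients and mean square are deduced from it would make the logic read forwards. Your observation that the unconditional approximations $e^{ik\theta_{P_N}}$ from Proposition \ref{prop4} cannot be used here because $B^2$ is not stable under multiplication by the unbounded factor $|\zeta|^k$ is correct, and is the reason Proposition \ref{zo} needs the stronger hypothesis $f^2\in A^2$ rather than just a good phase approximation.
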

\noindent 
Therefore, since $\sigma_1=\sigma_2=1/2$, the functions  $\zeta, \zeta^2\in B^2$ are not CNS.

Although it is not known that $\sigma_k=1/2$ for $k>2$, our next proposition permits us to compute the Fourier coefficient of $\zeta^k$ in any case. 
\begin{proposition}\label{lem1}For each $k\in\mathbb{N}$, if $\mu(1/2)<1/k$ then
\begin{eqnarray}
\langle \zeta^k,e_{\lambda}\rangle = \left\{
                \begin{array}{ll}
                  \frac{d_{k}(n)}{n^{\sigma}} \hspace{1.25cm}(\lambda=-\log n \hspace{0.3cm}(n\in\mathbb{N}))  \\
               0\hspace{1.83cm}(\textrm{\emph{otherwise}}).
                \end{array}
              \right.\nonumber
              \end{eqnarray}
\end{proposition}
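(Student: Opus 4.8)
The plan is to evaluate $\langle\zeta^k,e_\lambda\rangle=\lim_{T\to\infty}\frac1T\int_0^T\zeta(\sigma+it)^k e^{-i\lambda t}\,dt$ by moving the path of integration rightward, past $\Re s=1$, to a vertical line on which the Dirichlet series for $\zeta(s)^k$ converges absolutely. First I would replace $\int_0^T$ by $\int_{t_0}^T$ for a fixed $t_0>0$ (this alters the average by $O_{t_0}(1/T)$ and so is harmless in the limit), the point being to keep the order-$k$ pole of $\zeta(s)^k$ at $s=1$ strictly below the contour. Writing $s=\sigma+it$ and $e^{-i\lambda t}=e^{\lambda\sigma}e^{-\lambda s}$, the truncated integral equals $\frac{e^{\lambda\sigma}}{i}\int_{\sigma+it_0}^{\sigma+iT}\zeta(s)^k e^{-\lambda s}\,ds$. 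Since $\zeta(s)^k e^{-\lambda s}$ is holomorphic on the closed rectangle with vertices $\sigma+it_0$, $\sigma'+it_0$, $\sigma'+iT$, $\sigma+iT$ for any fixed $\sigma'>1$, Cauchy's theorem lets me replace its left edge by the right edge $\Re s=\sigma'$, with no residue, at the cost of the two horizontal edges at heights $t_0$ and $T$.

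On $\Re s=\sigma'$ the series $\zeta(s)^k=\sum_n d_k(n)n^{-s}$ may be integrated termwise, producing $\sum_n d_k(n)n^{-\sigma'}\int_{t_0}^T e^{-it(\lambda+\log n)}\,dt$. If $\lambda=-\log m$ for some $m\in\mathbb N$, the term $n=m$ contributes $d_k(m)m^{-\sigma'}(T-t_0)$, and every other term is $O\!\left(1/|\lambda+\log n|\right)$; since $|\lambda+\log n|\to\infty$ (and is bounded away from $0$ over the finitely many small $n$), the off-diagonal sum is $O_\lambda(1)$. Carrying the factors $e^{\lambda\sigma}$ and $e^{-\lambda\sigma'}$ through, the powers of $m$ cancel and this edge contributes $\frac{d_k(m)}{m^\sigma}\!\left(1-t_0/T\right)+O_m(1/T)$, which tends to the claimed value; when $\lambda$ is not of the form $-\log n$ there is no diagonal term and the edge contributes only $O_\lambda(1/T)$. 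The bottom edge at height $t_0$ is a fixed, finite integral, hence $O_{\lambda,t_0}(1/T)\to0$.

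The only edge with genuine content is the top one, $\frac1T\int_\sigma^{\sigma'}\zeta(u+iT)^k e^{-\lambda(u+iT)}\,du$. Here I would apply the subconvexity bound $\zeta(u+iT)\ll_\epsilon T^{\mu(u)+\epsilon}$, uniformly for $u\in[\sigma,\sigma']$ (a routine Phragm\'en--Lindel\"of estimate together with the monotonicity of $\mu$), to bound this edge by $O_{\lambda,\epsilon}\!\left(T^{k\mu(\sigma)-1+\epsilon}\right)$. This is the linchpin: the edge is $o(1)$ precisely when $k\mu(\sigma)<1$, and ensuring this for \emph{every} $\sigma\in(1/2,1)$ is exactly where the hypothesis is used. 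Indeed, by convexity of $\mu$ on $[1/2,1]$ with $\mu(1)=0$ one has $\mu(\sigma)\le 2(1-\sigma)\mu(1/2)<\mu(1/2)<1/k$ throughout the open strip, so $k\mu(\sigma)<1$ there. Assembling the three edges and letting $T\to\infty$ completes the proof; note that neither an approximate functional equation nor a zero-density estimate is required. The remaining difficulties are entirely technical: excluding $s=1$ from the rectangle via the shift from $0$ to $t_0$, securing the subconvexity bound uniformly along the top edge, and handling the $\Re s=\sigma'$ off-diagonal terms with a little care when $\lambda$ is close to (though not equal to) some $-\log n$ with $n$ small, which affects only the $\lambda$-dependent implied constant and not the limit.
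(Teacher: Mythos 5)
Your plan matches the paper's own proof almost step for step: the paper integrates $\zeta^k(s)e^{\lambda(2\sigma-s)}$ around the rectangle with vertices $\sigma+i$, $1+\delta+i$, $1+\delta+iT$, $\sigma+iT$, evaluates the right edge via the absolutely convergent Dirichlet series, and kills the horizontal edges using $\mu(1/2)<1/k$ to get a $T^{-A}$ gain. The only cosmetic differences are that the paper fixes $t_0=1$ rather than a general $t_0$, and it invokes only the monotonicity of $\mu$ (so $\mu(r)\le\mu(\sigma)\le\mu(1/2)<1/k$ for $r\ge\sigma$) where you invoke convexity with $\mu(1)=0$; both give the needed $k\mu(\sigma)<1$.
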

\noindent Thus, by Proposition \ref{lem1} and the fact that $\mu(1/2)<1/6$, there is a broader class of ``almost-periodic'' functions to which the functions $\zeta^k$ ($1\leq k\leq 6$) belong, which we now introduce.

\begin{lemma}\label{mainlemma}Let $E=\{e_{\lambda}:\lambda\in\mathbb{R}\}$ and consider the set of all $f$ satisfying 
\begin{eqnarray}\label{coeffbound}
\|f\|_{A^2}^{2}=\sum_{\lambda\in \mathbb{R}}|\langle f,e_{\lambda}\rangle |^2<\infty.\nonumber
\end{eqnarray}
Then the set $A^2=A^2(\mathbb{R}^+)$ of equivalence classes $\phi+f$ with $\phi\in E^{\perp}$ is a Hilbert space with inner product
\begin{eqnarray}\label{innerprodH1}
\langle f,g\rangle_{A^2} =\sum_{\lambda\in \mathbb{R}}\langle f,e_{\lambda}\rangle \overline{ \langle g,e_{\lambda}\rangle }
\end{eqnarray}
and we have 
\begin{eqnarray}\label{decomp}A^2=E^{\perp}\oplus B^2.
\end{eqnarray}
\end{lemma}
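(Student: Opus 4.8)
The plan is to verify the three assertions in turn: that $\|\cdot\|_{A^2}$ descends to a genuine norm on the quotient space $A^2$ of equivalence classes $\phi+f$ with $\phi\in E^\perp$, that $A^2$ equipped with $\langle\cdot,\cdot\rangle_{A^2}$ is complete, and finally that the orthogonal decomposition \eqref{decomp} holds. Since the map $f\mapsto(\langle f,e_\lambda\rangle)_{\lambda\in\mathbb{R}}$ is by definition linear, with kernel exactly $E^\perp$ among the $f$ under consideration, the first step is essentially formal: the coefficient map induces a linear injection $A^2\hookrightarrow\ell^2(\mathbb{R})$ (the target being square-summable sequences indexed by $\mathbb{R}$, which are supported on a countable set), and \eqref{innerprodH1} is the pullback of the standard $\ell^2$ inner product, so positive-definiteness on the quotient and the norm axioms are inherited. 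First I would record this, noting that the footnote's observation — each $f\in A^2$ has at most countably many nonzero coefficients — is exactly what places the image inside $\ell^2(\mathbb{R})$.

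For completeness, the cleanest route is to show the coefficient map has \emph{closed} (indeed, full) image in $\ell^2(\mathbb{R})$, so that $A^2$ is isometrically isomorphic to a closed subspace of a Hilbert space. Given any $(c_\lambda)\in\ell^2(\mathbb{R})$ supported on a countable set $\{\lambda_n\}$, the series $\sum_n c_{\lambda_n}e_{\lambda_n}$ converges in the seminorm $\|\cdot\|$ — this is precisely the Besicovitch construction, since $B^2$ is complete and the partial sums form a Cauchy sequence by Parseval \eqref{parseval} applied to the exponentials (which are orthonormal for $\langle\cdot,\cdot\rangle$). Its limit $f$ lies in $B^2\subseteq A^2$ and has the prescribed coefficients, so the coefficient map is onto $\ell^2(\mathbb{R})$ and hence an isometric isomorphism $A^2\cong\ell^2(\mathbb{R})$; completeness of $A^2$ follows.

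For \eqref{decomp}: the two summands are orthogonal in $\langle\cdot,\cdot\rangle_{A^2}$ because any $\phi\in E^\perp$ has all coefficients zero, so $\langle\phi,f\rangle_{A^2}=0$ for every $f$. To see they span, I would use the previous paragraph: given $f\in A^2$, the Besicovitch sum $h=\sum_\lambda\langle f,e_\lambda\rangle e_\lambda$ lies in $B^2$ and has the same coefficients as $f$, whence $f-h\in E^\perp$; thus $f=(f-h)+h$ exhibits the decomposition, and it is unique by orthogonality. One subtlety to address carefully is the interaction of the two seminorms: an element of $A^2$ is an equivalence class modulo $E^\perp$, and $E^\perp$ strictly contains $\{\phi:\|\phi\|=0\}$ (the $B^2$-null classes), so I must be consistent about which quotient is being taken and check that $B^2$, which is the quotient by $\|\phi\|=0$, really does inject into $A^2$; this works because a $B^2$-null function has all Fourier coefficients zero, i.e. lies in $E^\perp$. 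I expect this bookkeeping — keeping the $\|\cdot\|$-null space, $E^\perp$, and the induced maps between the corresponding quotients straight — to be the main obstacle, while the Hilbert-space axioms and the decomposition itself are routine once the identification $A^2\cong\ell^2(\mathbb{R})$ is in place.
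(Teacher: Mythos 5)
Your proposal is correct and follows essentially the same route as the paper: the paper's proof also rests on the observation that $B^2$ is dense in $A^2$ (which is exactly your surjectivity of the coefficient map via the Besicovitch/Riesz--Fischer construction), so that completeness of $A^2$ is inherited from $B^2$ and the pointwise decomposition $f=\phi+g$ with $\phi\in E^\perp$, $g\in B^2$ follows. You spell out more carefully than the paper the bookkeeping between the two quotient constructions ($E^\perp$ versus the $\|\cdot\|$-null functions) and the identification $A^2\cong\ell^2(\mathbb{R})$, which is helpful but not a different argument.
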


It is clear from (\ref{decomp}) that if $f\in A^2$ then $f\in B^2\Leftrightarrow\|\phi\|= 0$. However, the space $A^2$ does contain many more functions than $B^2$ and it is easily checked that  there exist $\phi\in E^{\perp}$ for which $0<\|\phi\|\leq\infty$, for example
\begin{eqnarray} \label{functions}
\left\|\frac{\zeta\left(1/2+i\cdot\right)}{\log^{1/2} (\cdot+1)}\right\|=1\hspace{0.2cm}  \textrm{  and }\hspace{0.2cm}  \left\|\frac{\zeta\left(1/2+i\cdot\right)}{\log^{1/3} (\cdot+1)}\right\|=\infty.\nonumber
\end{eqnarray}
Nonetheless, if $f\in A^2$ has certain properties that we assume in Proposition \ref{zo} below, then we may conclude that there are precisely two possibilities: either $f\in B^2$, or $f$ is CNS. Before stating the result, we note that $B^2$ has a particular group of unitary operators acting on it. This is the content of Lemma \ref{ugroup}.

\begin{lemma}\label{ugroup}
The set $U=\{u\in B^2:|u|=1\}$  is a multiplicative subgroup of $B^2$. In particular, if $u\in U$ then $u^{j}\in U$ $(j\in\mathbb{Z})$. Moreover, if $u_{\lambda}=ue_{\lambda }$ then the set $\{u_{\lambda }: \lambda\in\mathbb{R}\}$ is an orthonormal basis for $B^2$. Equivalently, the map $f\rightarrow uf$ $(u\in U)$ is a unitary operator on $B^2$. 
\end{lemma}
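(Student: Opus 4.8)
The plan is to verify the three assertions in turn, reducing everything to the orthonormal system $E=\{e_\lambda:\lambda\in\mathbb R\}$ together with a single approximation fact. First I would record that $E$ is an orthonormal basis of $B^2$: orthonormality is the elementary computation $\langle e_\lambda,e_\mu\rangle=\lim_{T\to\infty}\frac1T\int_0^T e^{i(\lambda-\mu)t}\,dt=\delta_{\lambda\mu}$, and completeness is exactly the Parseval relation (\ref{parseval}) specialised to $f=g$, i.e. $\|f\|^2=\sum_{\lambda}|\langle f,e_\lambda\rangle|^2$ for every $f\in B^2$.

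Next, the group axioms for $U$. The unit is $1=e_0\in U$. For $u\in U$ one has $|u|=1$ a.e., so $u^{-1}=\overline u$ pointwise; conjugation sends trigonometric polynomials (and Bohr functions) to functions of the same kind and preserves $\|\cdot\|$ (since $|\overline f|=|f|$), hence it restricts to an isometric bijection of $B^2$, giving $\overline u\in B^2$ with $|\overline u|=1$, i.e. $u^{-1}\in U$. The only substantive point is closure under products: if $u,v\in U$ then $|uv|=1$, so it suffices to show $uv\in B^2$. Here I would invoke the Bochner--Fej\'er summation method: applied to the bounded functions $u$ and $v$ it produces trigonometric polynomials $p_N,q_N$ with $\|p_N\|_\infty\le1$, $\|q_N\|_\infty\le1$ and $p_N\to u$, $q_N\to v$ in $B^2$ (the uniform bound comes from the Bochner--Fej\'er kernels being nonnegative kernels of total mass one). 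Then $p_Nq_N$ is a trigonometric polynomial and
\[
\|uv-p_Nq_N\|\le\|u(v-q_N)\|+\|(u-p_N)q_N\|\le\|v-q_N\|+\|u-p_N\|\to0,
\]
using $|u|=1$ and $\|q_N\|_\infty\le1$. Since $B^2$ is complete, $p_Nq_N$ converges in $B^2$ to a function equal (in $\|\cdot\|$) to $uv$, so $uv\in B^2$, hence $uv\in U$; iterating gives $u^{j}\in U$ for all $j\in\mathbb Z$.

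For the operator statement I would first check that $T_u\colon f\mapsto uf$ maps $B^2$ into $B^2$: for a trigonometric polynomial $f$ the estimate $\|uf-p_Nf\|\le\|f\|_\infty\|u-p_N\|\to0$ with the $p_N$ above shows $uf\in B^2$; for general $f\in B^2$, choosing Bohr $f_N\to f$ gives $\|u(f-f_N)\|=\|f-f_N\|\to0$, so $uf\in B^2$. The map $T_u$ is linear, and isometric because $|uf|^2=|f|^2$ pointwise forces $\langle uf,ug\rangle=\langle f,g\rangle$. Since $\overline u\in U$ and $T_uT_{\overline u}=T_{\overline u}T_u=T_1=\mathrm{Id}$, the isometry $T_u$ is invertible, hence unitary. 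A unitary operator carries an orthonormal basis to an orthonormal basis, so $\{u_\lambda=ue_\lambda:\lambda\in\mathbb R\}=\{T_ue_\lambda\}$ is an orthonormal basis of $B^2$; conversely this is equivalent to $T_u$ being unitary, since $T_u$ is the continuous extension of the assignment $e_\lambda\mapsto ue_\lambda$ and the two descriptions coincide on trigonometric polynomials.

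I expect the main obstacle to be the approximation used for closure under products and for well-definedness of $T_u$: an arbitrary $B^2$-approximating sequence for $u\in U$ need not be uniformly bounded, and it is precisely to control the sup norm of the approximants that one must appeal to Bochner--Fej\'er summation (equivalently, to the fact that bounded Besicovitch almost-periodic functions form a multiplicative algebra). Everything else is routine Hilbert-space bookkeeping.
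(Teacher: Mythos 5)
Your proof is correct, but it runs in the opposite order from the paper's and leans on heavier machinery than is needed. The paper establishes the orthonormal--basis property of $\{u_\lambda\}$ \emph{first}, and essentially without any approximation of $u$: starting from the $B^2$-expansion of $\overline u$ in the basis $\{e_{\lambda-\nu}\}$ (valid because conjugation preserves $B^2$, as you also note), it multiplies pointwise by $ue_\nu$; since $|ue_\nu|=1$ this leaves the modulus of the remainder unchanged, which gives the inequality (\ref{uexp2}) directly and shows that every $e_\nu$ lies in the closed span of $\{u_\lambda\}$, hence $\{u_\lambda\}$ is a complete orthonormal set. Unitarity of $h\mapsto uh$ then falls out of Parseval, and closure of $U$ under multiplication comes from one more application of the same modulus trick, writing $uv-\sum\langle uv,v_{\lambda_m}\rangle v_{\lambda_m}=v\bigl(u-\sum\langle u,e_{\lambda_m}\rangle e_{\lambda_m}\bigr)$. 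You instead prove the algebraic and operator statements first and deduce the basis property at the end from ``unitary sends an orthonormal basis to an orthonormal basis'', which is fine. However, your appeal to Bochner--Fej\'er summation is not actually necessary: your own argument that $T_u$ maps $B^2$ into $B^2$ uses only the estimate $\|uf-p_Nf\|\le\|f\|_\infty\|u-p_N\|$ for Bohr $f$, which holds for an \emph{arbitrary} Bohr approximant $p_N\to u$ and never uses $\|p_N\|_\infty\le1$; and once $T_u(B^2)\subseteq B^2$ is known, closure of $U$ under products is just the case $f=v$. So the uniform sup-norm control that Bochner--Fej\'er provides is superfluous here, and dropping it spares you from having to verify that Bochner--Fej\'er summation behaves as expected in $B^2(\mathbb{R}^+)$ rather than $B^2(\mathbb{R})$, which is a genuine technical point you would otherwise need to address. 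In short: both routes are valid, but the paper's is tighter because the completeness of $\{u_\lambda\}$ is obtained from the pointwise identity $|u|=1$ alone, without ever approximating $u$ by trigonometric polynomials.
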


\noindent Propositions \ref{sig}, \ref{lem1} and lemmas \ref{mainlemma}, \ref{ugroup} are proved in Section \ref{auxs}.

The main result of this section is Proposition \ref{zo}, where we write $\theta_f=\arg f$.

\begin{proposition}\label{zo}Let $g\in B^2$, $\langle f,e_{\lambda}\rangle=\langle g,e_{\lambda}\rangle$ and $\langle f^2,e_{\lambda}\rangle=\langle g^2,e_{\lambda}\rangle$ $(\lambda\in\mathbb{R})$, so that $f,f^2\in A^2$. Also  let $e^{i\theta_f},e^{i\theta_g}\in U$, $\langle e^{i\theta_f},e_{\lambda}\rangle=\langle e^{i\theta_g},e_{\lambda}\rangle$ $(\lambda\in\mathbb{R})$, and let $e^{i\theta_{g_N}}$ be a sequence of Bohr functions such that $e^{i\theta_{g_N}}\rightarrow e^{i\theta_g}$ in $B^2$ as $N\rightarrow\infty$. Then we have the following.
\begin{itemize}
\item[(\emph{a})]{$f\in B^2$ if and only if $f$ is not CNS.}
\item[(\emph{b})]{ If the maps $h\mapsto e^{i\theta_f}h$ and $h\mapsto e^{i\theta_g}h$ are bounded operators on $A^2$, then $f\in B^2$.}
\end{itemize}
\end{proposition}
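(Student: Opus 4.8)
The plan is to prove both parts by showing that, in the decomposition $A^2=E^\perp\oplus B^2$ of Lemma~\ref{mainlemma}, the $E^\perp$-component of $f$ has zero seminorm; equivalently, writing $\|f\|_*^2:=\limsup_{T\to\infty}\tfrac1T\int_0^T|f|^2$, that $\|f\|_*^2=\|g\|^2$. Indeed, once $\|f\|_*<\infty$ the functional $h\mapsto\langle f,h\rangle:=\lim_T\tfrac1T\int_0^Tf\bar h$ is $\|\cdot\|$-continuous on $B^2$ and agrees with $\langle[f],\cdot\rangle_{A^2}$ on trigonometric polynomials, hence on all of $B^2$; so $\langle f,g\rangle=\langle[g],g\rangle_{A^2}=\|g\|^2$, and $\|f-g\|^2=\|f\|_*^2-2\operatorname{Re}\langle f,g\rangle+\|g\|^2=0$. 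The structural input is that $e^{i\theta_f},e^{i\theta_g}\in U\subseteq B^2$ have the same Fourier coefficients, so by Parseval in $B^2$ they coincide there; call the common element $u\in U$. By Lemma~\ref{ugroup}, multiplication by $\bar u$ and by $\bar u^2$ is a $\|\cdot\|$-isometry carrying $B^2$ onto $B^2$, so $|g|=\bar u g\in B^2$ and, pointwise, $|f|=\bar u f$, $|f|^2=\bar u^2 f^2$. The whole problem is to transport the spectral identities $[f]=[g]$ and $[f^2]=[g^2]$ in $A^2$ through multiplication by $\bar u$ and $\bar u^2$.

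For part~(b) this transport is immediate: the boundedness hypothesis makes multiplication by $\bar u$ and $\bar u^2$ bounded operators on the Hilbert space $A^2$, hence well-defined on equivalence classes, so $[|f|^2]=\bar u^2[f^2]=\bar u^2[g^2]=[|g|^2]$. Comparing coefficients at $\lambda=0$, and using $\tfrac1T\int_0^T|g|^2\to\|g\|^2$, gives $\|f\|_*^2=\langle|f|^2,e_0\rangle=\langle|g|^2,e_0\rangle=\|g\|^2$ (all these limits exist because $|f|^2,|g|^2\in A^2$; in particular $\|f\|_*<\infty$ is automatic here). One then concludes $f\in B^2$ as above.

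Part~(a): $(\Leftarrow)$ is the remark preceding the statement. For $(\Rightarrow)$ assume $f$ is not CNS; the decisive step is again to establish $\|f\|_*<\infty$, now without a boundedness hypothesis. Fix a modulus-one Bohr function $v=e^{-2i\theta_{g_N}}$ with $\|v-\bar u^2\|$ small, then a trigonometric polynomial $p$ with $\|v-p\|_\infty$ small, and put $q=v-p$, $s=\bar u^2-v$, so $\bar u^2=p+q+s$ with $p$ a trigonometric polynomial, $|q|\le\|q\|_\infty$ small, and $s$ bounded with $\|s\|$ small and $|s|\le2$. Splitting $\tfrac1T\int_0^T|f|^2=\tfrac1T\int_0^Tf^2p+\tfrac1T\int_0^Tf^2q+\tfrac1T\int_0^Tf^2s$: the first term converges to the fixed finite number $\langle[f^2],p\rangle_{A^2}$; the second is $\le\|q\|_\infty\,\tfrac1T\int_0^T|f|^2$ by Hölder; and the third is controlled by Proposition~\ref{holder}(a) with the bounded function $s$, giving $\limsup_T|\tfrac1T\int_0^Tf^2s|\le\|s\|_{d\infty}\,\|f\|_*^2$ where $\|s\|_{d\infty}=\sup\{x:\Pr(|s|\ge x)>0\}$. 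Hence $\|f\|_*^2\le C+(\|q\|_\infty+\|s\|_{d\infty})\|f\|_*^2$, so if the parameters are chosen with $\|q\|_\infty+\|s\|_{d\infty}<1$ then $\|f\|_*^2\le C/(1-\|q\|_\infty-\|s\|_{d\infty})<\infty$. With $\|f\|_*<\infty$ in hand, the same three-term splitting (letting $T\to\infty$, then refining the approximation) shows $\tfrac1T\int_0^Tf^2\bar u^2\to\langle[f^2],\bar u^2\rangle_{A^2}=\langle[g^2],\bar u^2\rangle_{A^2}=\lim_T\tfrac1T\int_0^Tg^2\bar u^2=\|g\|^2$, i.e.\ $\|f\|_*^2=\|g\|^2$, and one finishes exactly as before.

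The main obstacle is making $\|s\|_{d\infty}<1$ for a suitable Bohr approximant $v=e^{-2i\theta_{g_N}}$: $\|s\|\to0$ does not imply $\|s\|_{d\infty}\to0$. One exploits that $\bar u^2$ and $v$ both have modulus one: with $e^{i\theta_{g_N}}=u\,e^{i\delta_N}$, $\delta_N\in(-\pi,\pi]$, one has $|s|=2|\sin\delta_N|$ and $\|u-e^{i\theta_{g_N}}\|^2=\langle 2-2\cos\delta_N\rangle$, and since $2-2\cos\delta_N$ is bounded below by a positive constant on $\{|\sin\delta_N|\ge x\}$ for each $0<x<2$, the density of $\{|s|\ge x\}$ is $O_x(\|u-e^{i\theta_{g_N}}\|^2)\to0$. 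Converting this into an actually null level set — so that $\|s\|_{d\infty}$ drops below $1$ — is the delicate point; in the application to $f=\zeta^k$ the stronger pointwise-in-density convergence supplied by Proposition~\ref{prop4} makes it transparent. The remaining items — existence of the relevant limits, the continuity identifications $\langle f,\cdot\rangle=\langle[f],\cdot\rangle_{A^2}$ and their analogues for squares, and the passage between $f^2,g^2$ and their truncations — are routine once $\|f\|_*<\infty$, using the uniform integrability in density of the square of a $B^2$ function and of a not-CNS function of finite mean square.
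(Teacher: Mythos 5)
Your approach coincides in strategy with the paper's: reduce to showing $\|f\|^2=\|g\|^2$ (equivalently, that $\tfrac1T\int_0^T|f|^2$ has the finite limit $\langle|g|^2,e_0\rangle$), and then conclude $f\in B^2$ from $\|f-g_M\|^2=\|g\|^2-\|g_M\|^2\to0$ together with $\langle f,e_\lambda\rangle=\langle g,e_\lambda\rangle$. Your treatment of part~(b) is essentially identical to the paper's use of (\ref{unitary}): transport $[f^2]=[g^2]$ through the (unitary) multiplication operators and compare the $\lambda=0$ coefficient. For part~(a) you decompose $e^{-2i\theta_f}$ as a trigonometric polynomial, plus a sup-small remainder of the Bohr approximant, plus the difference $e^{-2i\theta_f}-e^{-2i\theta_{g_N}}$, and iterate the resulting self-improving bound; the paper instead applies $1-\cos2x=2\sin^2x$ to reach (\ref{ebound}) and (\ref{ebound2}) directly. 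These are cosmetically different but substantively the same argument.

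The step you flag as ``the delicate point'' is precisely the one place where the paper's own proof is not self-contained. Immediately after (\ref{ebound}) the paper asserts that $\|e^{i\theta_f}-e^{i\theta_{g_N}}\|\to0$ yields $\Pr(\sin^2(\theta_f-\theta_{g_N})\ge\epsilon)=0$ for $N\ge N(\epsilon)$, but $B^2$-norm convergence only gives a Chebyshev-type statement (the density of the bad level set tends to $0$), not a literally null level set; in particular $\sup\{x:\Pr(\sin^2(\theta_f-\theta_{g_N})\ge x)>0\}$ need not shrink with $N$. You correctly observe that in the zeta application this gap is bridged by Proposition~\ref{prop4}, which proves $\Pr(|\theta_{Z_N}(\sigma+i\cdot)|\ge\epsilon)=0$ outright, so Theorems~\ref{zetacns} and~\ref{thzeta} are unaffected. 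But as a proof of Proposition~\ref{zo}(a) in the generality stated, both your write-up and the paper's rest on this unestablished implication; either an extra hypothesis (density-null, rather than $B^2$, convergence of the phases) should be added to Proposition~\ref{zo}, or the step needs a separate justification. Apart from making this gap explicit rather than asserting it silently, your proof mirrors the paper's. One small caution: when you write $\|f\|_*^2\le C+(\|q\|_\infty+\|s\|_{d\infty})\|f\|_*^2$, this should be derived at finite $T$ (with $C+o(1)$ on the right) and then the $\limsup$ taken, so that the rearrangement is valid without a priori knowing $\|f\|_*<\infty$ — the paper's (\ref{ebound2}) is handled this way.
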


\begin{proof}[Proof of Proposition \ref{zo}]As we have seen, if $f\in B^2$ then $f$ is not CNS, so we now assume that $f$ is not CNS and prove the converse statement. Using the identity $1-\cos(2x)=2\sin^2(x)$  we have 
\begin{eqnarray}\label{ebound}
0 &\leq &\frac{1}{T}\int_{0}^T|f|^2dt-\Re\frac{1}{T} \int_{0}^Tf^2e^{-2i\theta_{g_N}}dt \nonumber\\ &=&\frac{2}{T}\int_{0}^T|f|^2\sin^2(\theta_f-\theta_{g_N})dt\nonumber \\&\leq & \frac{2\sup\{x:\Pr(\sin^2(\theta_f-\theta_{g_N})\geq x)>0\}+2\epsilon}{T}\int_{0}^T|f|^2dt
\end{eqnarray}
for every $\epsilon>0$ and  $T\geq T(\epsilon)$ by (\ref{prob}). Since $\lim_{N\rightarrow\infty}\|e^{i\theta_f}-e^{i\theta_{g_N}}\|=0$, there is also an $N(\epsilon)$ such that $\Pr(\sin^2(\theta_f-\theta_{g_N})\geq \epsilon)=0$ for $N\geq N(\epsilon)$, so (\ref{ebound}) shows that
\begin{eqnarray}
0 \leq \frac{1}{T}\int_{0}^T|f|^2dt-\Re\frac{1}{T} \int_{0}^Tf^2e^{-2i\theta_{g_N}}dt\leq \frac{4\epsilon}{T}\int_{0}^T|f|^2dt\nonumber
\end{eqnarray}
for $N\geq N(\epsilon)$ and $T\geq T(\epsilon)$, which gives
\begin{eqnarray}\label{ebound2}
\Re\frac{1}{T} \int_{0}^Tf^2e^{-2i\theta_{g_N}}dt\leq \frac{1}{T}\int_{0}^T|f|^2dt\leq \frac{1}{1-4\epsilon}\Re\frac{1}{T} \int_{0}^Tf^2e^{-2i\theta_{g_N}}dt
\end{eqnarray}
for $0<\epsilon<1/4$. Since $e^{-2i\theta_{g_N}}$ is a Bohr function it has a uniformly convergent Fourier series of the form (\ref{Bohr}), so the map $f\mapsto e^{-2i\theta_{g_N}} f$ is a bounded operator on $A^2$. Therefore, since $f^2\in A^2$, the limits 
\begin{eqnarray}\label{ebound3}
\langle f^2, e^{2i\theta_{g_N}}\rangle=\lim_{T\rightarrow\infty}\frac{1}{T} \int_{0}^Tf^2e^{-2i\theta_{g_N}}dt
\end{eqnarray}
exist and from (\ref{ebound2}) we see that the second moment  $\|f\|^2$ exists and 
\begin{eqnarray}\label{iprod2}
\|f\|^2=\lim_{N\rightarrow\infty}\Re\langle f^2, e^{2i\theta_{g_N}}\rangle.
\end{eqnarray} 
Moreover, since $\langle f^2,e_{\lambda}\rangle=\langle g^2,e_{\lambda}\rangle$ and $e^{2i\theta_{g_N}}$ is a Bohr function, we have
\begin{eqnarray}\label{ser}
\langle f^2,e^{2i\theta_{g_N}}\rangle=\left\langle g^2,e^{2i\theta_{g_N}}\right\rangle=\left\langle g,e^{2i\theta_{g_N}}\overline g\right\rangle=\left\langle g,e^{2i(\theta_{g_N}-\theta_g)}g\right\rangle.
\end{eqnarray}
Since also  
\begin{eqnarray}\label{lims}
\lim_{N\rightarrow\infty}\|e^{2i(\theta_{g_N}-\theta_g)}-1\|=\lim_{N\rightarrow\infty}\|e^{2i\theta_{g_N}}-e^{2i\theta_g}\|=0\nonumber
\end{eqnarray} 
and again because $e^{2i\theta_{g_N}}$ is a Bohr function, it follows that $e^{2i(\theta_{g_N}-\theta_g)}\in U\subset B^2$ so the map $h\mapsto e^{2i(\theta_g-\theta_{g_N})}h$ is a bounded operator on $B^2$ by Lemma \ref{ugroup}. Thus, since norm convergence implies weak convergence, by (\ref{iprod2}) and (\ref{ser}) we have
\begin{eqnarray}\label{iprod4}
\|f\|^2=\lim_{N\rightarrow\infty}\Re\left\langle g,e^{2i(\theta_{g_N}-\theta_g)}g\right\rangle=\langle g,g\rangle=\|g\|^2.
\end{eqnarray} 
Lastly, writing 
\begin{eqnarray}\label{partialb}
g_{M}= \sum_{n\leq M} \langle g,e_{\lambda_n}\rangle e_{\lambda_n}\nonumber
\end{eqnarray}
(not necessarily the same as $g_N$ above) and using (\ref{iprod4}) and $\langle f,e_{\lambda}\rangle=\langle g,e_{\lambda}\rangle$, we have
\begin{eqnarray}
\|f-g_M\|^2&=&\|f\|^2+\|g_M\|^2-2\Re\langle f,\overline g_M\rangle\nonumber\\
&=&\|g\|^2-\|g_M\|^2\nonumber\\
&= &
\|g-g_M\|^2\leq \epsilon\nonumber\hspace{1cm}(M> M(\epsilon)),
\end{eqnarray}
showing that $f\in B^2$.

To complete the proof we show that if the maps $h\mapsto e^{i\theta_f}h$ and $h\mapsto e^{i\theta_g}h$ are also  bounded operators on $A^2$ (and therefore unitary) then $f\in B^2$. Indeed, we have
\begin{eqnarray}\label{unitary}
\|f\|^2=\langle |f|^2,1\rangle_{A^2}=\langle f^2e^{-2i\theta_f},1\rangle_{A^2}=\langle f^2,e^{2i\theta_f}\rangle_{A^2}
\end{eqnarray}
for $f^2\in A^2$ because the map $h\mapsto e^{i\theta_f}h$ ($h\in A^2$) is unitary. Then, from the definition of the $A^2$ inner product (\ref{innerprodH1}), since $\langle f^2,e_{\lambda}\rangle=\langle g^2,e_{\lambda}\rangle$, $e^{i\theta_f},e^{i\theta_g}\in U$ and $\langle e^{i\theta_f},e_{\lambda}\rangle=\langle e^{i\theta_g},e_{\lambda}\rangle$, (\ref{unitary}) is equal to
\begin{eqnarray}
\langle g^2,e^{2i\theta_f}\rangle_{A^2}=\langle g^2,e^{2i\theta_g}\rangle_{A^2}=\langle g^2e^{-2i\theta_g},1\rangle_{A^2}=\langle |g|^2,1\rangle_{A^2}=\|g\|^2\nonumber
\end{eqnarray}
which gives (\ref{iprod4}).
\end{proof}

\section{Proof of Theorems \ref{zetacns} and \ref{thzeta}}\label{zeta}

\subsection{Theorem \ref{zetacns}}This follows from  Proposition \ref{zo} if $f=\zeta^k$ satisfies the conditions of Proposition \ref{zo} assuming that $\mu(1/2)<1/2k$. Firstly, this implies that 
\begin{eqnarray}\label{zetacoefs}
\langle f^{j},e_{\lambda}\rangle = \left\{
                \begin{array}{ll}
                  \frac{d_{jk}(n)}{n^{\sigma}} \hspace{1.2cm}(\lambda=-\log n \hspace{0.3cm}(n\in\mathbb{N}))  \\
               0\hspace{1.9cm}(\textrm{otherwise}).
                \end{array}
              \right.
              \end{eqnarray}
for $j=1$ and $2$, by Proposition \ref{lem1}. Set $g_N=P_N^k$ and note that since $B^2$ is a Hilbert space there is a $g\in B^2$ such that 
\begin{eqnarray}\label{RF}
\langle g,e_{\lambda}\rangle=\lim_{N\rightarrow \infty}\langle g_N,e_{\lambda}\rangle = \left\{
                \begin{array}{ll}
                  \frac{d_{k}(n)}{n^{\sigma}} \hspace{1.2cm}(\lambda=-\log n \hspace{0.3cm}(n\in\mathbb{N}))  \\
               0\hspace{1.9cm}(\textrm{otherwise})
                \end{array}
              \right.
\end{eqnarray}
and $\|g-g_N\|<\epsilon$ for $N>N(\epsilon)$. Also, by (\ref{parseval}) we have 
\begin{eqnarray}\label{square}
\langle g^2,e_{\lambda}\rangle=\langle g,\overline ge_{\lambda}\rangle &=&
\sum_{\nu\in\mathbb{R}}\langle g,e_{\nu}\rangle \overline{\langle \overline ge_{\lambda},e_{\nu}\rangle}\nonumber\\
&=& \sum_{\nu\in\mathbb{R}}   \langle g,e_{\nu}\rangle \langle g,e_{\lambda-\nu}\rangle\nonumber\\
&=& \sum_{m=1}^{\infty}  \frac{d_k(m)}{m^{\sigma}} \langle g,e_{\lambda+\log m}\rangle
\end{eqnarray}
and, since $\lambda+\log m$ is the logarithm of the reciprocal of a natural number if and only if $\lambda=-\log n$ and $m|n$, (\ref{square}) is
\begin{eqnarray}\label{square2}
\langle g^2,e_{\lambda}\rangle= \left\{
                \begin{array}{ll}
                 \sum_{m|n}  \frac{d_k(m)}{m^{\sigma}} \frac{d_k(n/m)}{(n/m)^{\sigma}}=\frac{d_{2k}(n)}{n^{\sigma}}\hspace{1.5cm}(\lambda=-\log n \hspace{0.3cm}(n\in\mathbb{N}))  \\
               0\hspace{6cm}(\textrm{otherwise}).
                \end{array}
              \right.\nonumber\\
\end{eqnarray}
Thus by (\ref{zetacoefs}), (\ref{RF}) and (\ref{square2}), this $g\in B^2$ satisfies  $\langle f,e_{\lambda}\rangle=\langle g,e_{\lambda}\rangle$ and $\langle f^2,e_{\lambda}\rangle=\langle g^2,e_{\lambda}\rangle$ $(\lambda\in\mathbb{R})$. 

Now, since $\theta_{g_N}=\theta_{P^k_N}=\Im \log P^k_N$ $\pmod {(-\pi,\pi]}$
and 
\begin{eqnarray}
\Im \log P^k_N(\sigma+it)=-k\sum_{2\leq n\leq N^2}\frac{\Lambda_N(n)\sin\left(t\log n\right)}{n^{\sigma}\log n}\pmod {(-\pi,\pi]}\nonumber
\end{eqnarray}
by (\ref{Pdef}), it follows that 
\begin{eqnarray}
e^{i\theta_{g_N}(\sigma+it)}=\exp\left(-ik\sum_{2\leq n\leq N^2}\frac{\Lambda_N(n)\sin\left(t\log n\right)}{n^{\sigma}\log n}\right)\nonumber
\end{eqnarray}
is a Bohr function because $e^{iy}$ $(y\in\mathbb{R})$ is uniformly continuous and a uniformly continuous function of a uniformly convergent sequence is uniformly convergent. Since also 
\begin{eqnarray}\label{lims}
\lim_{N\rightarrow\infty}\left\|e^{i(\theta_{g_N}-\theta_f)}-1\right\|=\lim_{N\rightarrow\infty}\left\|e^{i\theta_{g_N}}-e^{i\theta_f}\right\|=0
\end{eqnarray} 
by Proposition \ref{prop4}, it follows that $e^{i\theta_f}\in U$. Moreover, since
\begin{eqnarray}\label{bohrf}
\ell_N=\sum_{2\leq n\leq N^2}\frac{\Lambda_N(n)}{n^{\sigma+i\cdot}\log n}\nonumber
\end{eqnarray}
is a Bohr function, we have $\ell_N\rightarrow \ell$  in $B^2$  as $N\rightarrow\infty$ in which the limit $\ell\in B^2$ is necessarily bounded except possibly on a null set, so that also the $\ell_N$ $(N\in\mathbb{N})$ are bounded except possibly on a null set. Therefore, since
\begin{eqnarray}
\left\|g-\exp\ell_N \right\|=\left\|g-g_N \right\|<\epsilon\hspace{1cm}(N>N(\epsilon)),\nonumber
\end{eqnarray}
we note that $g$ cannot vanish except possibly on a null set. As such, since
  \begin{eqnarray}
\left\||g|-|g_N| e^{i(\theta_{g_N}-\theta_g)}\right\|=\left\|g-g_N \right\|,\nonumber
\end{eqnarray}
it follows that 
\begin{eqnarray}
\textrm{Pr}\left(\left|\theta_{g_N}-\theta_g\pmod {(-\pi,\pi]}\right|\geq \epsilon\right)=0\hspace{1cm}(N>N(\epsilon))\nonumber
\end{eqnarray}
so $\lim_{N\rightarrow\infty}\|e^{i\theta_{g_N}}-e^{i\theta_g}\|=0$. In other words, $e^{i\theta_{g_N}}\rightarrow e^{i\theta_g}$ in $B^2$ as $N\rightarrow\infty$ so $e^{i\theta_g}\in U$ and $\|e^{i\theta_f}-e^{i\theta_g}\|=0$ by (\ref{lims}) and  the triangle inequality. Since the latter implies that  
$\langle e^{i\theta_f},e_{\lambda}\rangle=\langle e^{i\theta_g},e_{\lambda}\rangle$ $(\lambda\in\mathbb{R})$ by Cauchy-Schwarz, we conclude that $f=\zeta^k$ satisfies the conditions of Proposition \ref{zo}  provided  that $\mu(1/2)<1/2k$.

\subsection{Theorem \ref{thzeta}}If $\sigma_k>1/2$ we have
\begin{eqnarray}\label{inverses}
\limsup_{T\rightarrow\infty}\frac{M_k(\sigma,T)}{T}=\infty\hspace{1cm}(1/2\leq \sigma< \sigma_k).
\end{eqnarray}
Using the identity $1-\cos(2x)=2\sin^2(x)$, we then have 
\begin{eqnarray}\label{ebound}
M_k(\sigma,T)-\Re\int_{0}^T\zeta^{2k}e^{-2ik\theta_{P_N}}dt =2\int_{0}^T|\zeta|^{2k}\sin^2(k\theta_{Z_N})dt\nonumber
\end{eqnarray}
so that 
\begin{eqnarray}\label{liminf}
\limsup_{T\rightarrow\infty}\frac{\int_{0}^T|\zeta|^{2k}\sin^2(k\theta_{Z_N})dt}{M_k(\sigma,T)}=\frac{1}{2}-\liminf_{T\rightarrow\infty}\frac{\Re\frac{1}{T}\int_{0}^T\zeta^{2k}e^{-2ik\theta_{P_N}}dt}{\frac{2}{T}M_k(\sigma,T)}.
\end{eqnarray}
Since $\zeta^{2k}\in A^2$ if $\mu(1/2)<1/2k$, the numerator of the second term on the right hand side of (\ref{liminf}) converges to the limit $\langle \zeta^{2k},e^{2ik\theta_{P_N}}\rangle$ as $T\rightarrow\infty$. Therefore, 
\begin{eqnarray}\label{liminf2}
\liminf_{T\rightarrow\infty}\frac{\Re\frac{1}{T}\int_{0}^T\zeta^{2k}e^{-2ik\theta_{P_N}}dt}{\frac{2}{T}M_k(\sigma,T)} &\sim &\liminf_{T\rightarrow\infty}\frac{\Re \langle \zeta^{2k},e^{2ik\theta_{P_N}}\rangle}{\frac{2}{T}M_k(\sigma,T)}\nonumber\\
&=&\frac{\Re \langle \zeta^{2k},e^{2ik\theta_{P_N}}\rangle}{2}\left(\limsup_{T\rightarrow\infty}\frac{M_k(\sigma,T)}{T}\right)^{-1}=0\nonumber
\end{eqnarray}
by (\ref{inverses}).

On the other hand, if $\mu(1/2)<1/2k$, then a trivial modification of the proof of Proposition \ref{sig} shows that $\zeta^k(\sigma+it)\in B^2$ for $\sigma_k<\sigma<1$.
In particular, $\zeta^k(\sigma+it)$ is not CNS for  $\sigma_k<\sigma<1$, in which case we have
\begin{eqnarray}\label{probab}
\limsup_{T\rightarrow\infty}\frac{\int_{0}^T|\zeta|^{2k}\sin^2(k\theta_{Z_N})dt}{M_k(\sigma,T)}\leq \sup\{x:\Pr(\sin^2(k\theta_{Z_N})\geq x)>0\}<\epsilon\hspace{0.5cm}(N>N(\epsilon))\nonumber
\end{eqnarray}
 by (\ref{prob}) and Proposition \ref{prop4}.

\section{Proof of Propositions \ref{holder} and \ref{prop4}}\label{props}

\subsection{Proposition \ref{holder}}
Let $g$ be bounded, assume (\ref{meas}) and denote by $S^c$ the complement of $S$ in $\mathbb{R}^{+}$. Then for every null set $S$ we have 
\begin{eqnarray}\label{prob3}
\left|\int_{0}^Tf^2gdt \right|&=&\left|\int_{S^c\cap [0,T]}f^2gdt+\int_{S\cap [0,T]}f^2gdt\right|\nonumber\\
&\leq&\left|\int_{S^c\cap [0,T]}f^2gdt\right|+o\left(\int_{0}^T|f|^2dt\right)\nonumber\\
&\leq&(1+o(1))\sup_{t\in S^c\cap [0,T]}|g(t)|\int_{0}^T|f|^2dt.\nonumber
\end{eqnarray}
Since $S$ is an arbitrary null set, we minimise over those $S$ giving 
\begin{eqnarray}\label{prob4}
\limsup_{T\rightarrow\infty}\frac{\left|\int_{0}^Tf^2gdt\right|}{\int_{0}^T|f|^2dt}\leq \inf_{|S|=0}
\sup_{t\in S^c}|g(t)|= \sup\{x:\Pr(|g|\geq x)>0\}
\end{eqnarray}
which is (\ref{prob}). Conversely, assume (\ref{prob4}) and take $g=e^{-2i\theta_f}\chi_{S}$ where $\chi_S$ is the characteristic function of $S$. Then 
\begin{eqnarray}
\limsup_{T\rightarrow\infty}\frac{\int_{S\cap [0,T]}|f|^2dt}{\int_{0}^T|f|^2dt}=\limsup_{T\rightarrow\infty}\frac{\left|\int_{0}^Tf^2gdt\right|}{\int_{0}^T|f|^2dt}\leq \sup\{x:\Pr(|g|\geq x)>0\}=0\nonumber
\end{eqnarray}
which gives (\ref{meas}).

Now suppose that $\int_{0}^T|f|^2dt\gg T$. We have 
\begin{eqnarray}\label{prob5}
\left(\frac{\int_{S\cap [0,T]}|f|^2dt}{\int_{0}^T|f|^2dt}\right)^{1/2}
&\sim& \left| \left(\frac{\int_{S\cap [0,T]}|f|^2dt}{\int_{0}^T|f|^2dt}\right)^{1/2} - \left(\frac{\int_{S\cap [0,T]}|f_{N,S}|^2dt}{\int_{0}^T|f|^2dt}\right)^{1/2} \right|\nonumber\\
&\leq&  \left(\frac{\int_{S\cap [0,T]}|f-f_{N,S}|^2dt}{\int_{0}^T|f|^2dt}\right)^{1/2} \nonumber
\end{eqnarray}
so (\ref{ot}) implies (\ref{meas}). Conversely, (\ref{meas}) implies (\ref{ot}) by taking $f_{N,S}=0$. 

To complete the proof, we note that if $\int_{0}^T|f|^2dt\ll T$ then $f$ is bounded except possibly on a null set $S=S(f)$, so there is already a bounded function $f_{N,S^c}$ such that $|f-f_{N,S^c}|^2<\epsilon$ on $S^c$. Thus, taking 
\begin{eqnarray}
f_N=f_{N,S}\chi_{S}+f_{N,S^c}\chi_{S^c},\nonumber
\end{eqnarray}
we have
\begin{eqnarray}
\frac{1}{T}\int_{0}^T|f-f_N|^2 dt &=&\frac{1}{T}\int_{S\cap [0,T]}|f-f_{N,S}|^2dt+\frac{1}{T}\int_{S^c\cap [0,T]}|f-f_{N,S^c}|^2dt\nonumber\\
&\ll & \frac{\int_{S\cap [0,T]}|f-f_{N,S}|^2dt}{\int_{0}^T|f|^2dt}+\frac{\epsilon}{T}\textrm{meas}\left(S^c\cap [0,T]   \right)
\nonumber
\end{eqnarray}
and using (\ref{ot}) gives (\ref{sup}). Conversely,  (\ref{sup}) implies  (\ref{ot}) if $\int_{0}^T|f|^2dt\gg T$.

\subsection{Proof of Proposition \ref{prop4}}
For $X>2$ we write 
\begin{eqnarray}
\textrm{Arg} Z_X =\theta_{ Z_X}\pmod {(-\pi,\pi]}=\Im \log Z_X \pmod {(-\pi,\pi]}\nonumber
\end{eqnarray} 
and note that the result will follow if it can be shown that there is a null set $R=R(\sigma,T)$ such that for every fixed $\sigma>1/2+\delta$ and $\epsilon>0$ there is an $N(\epsilon)$ such that   
\begin{eqnarray}\label{toprove}
\limsup_{T\rightarrow\infty}\frac{1}{T}\int_{[0,T]\setminus R}|\log Z_N (\sigma+it)|dt< \epsilon \hspace{1cm}(N>N(\epsilon)).
\end{eqnarray}
Let $N(\sigma, T)$ denote the number of zeros $\rho=\beta+i\gamma$ with $\beta\geq\sigma$ and $|\gamma|\leq T$ and recall the well-known result that  for every fixed $\sigma>1/2$ there is an $\alpha(\sigma)<1$ such that $N(\sigma, T)\ll T^{\alpha(\sigma)}$. Accordingly, for $\delta>0$ fixed and  $1/2+\delta\leq \sigma<1$  note that the number of zeros with $\beta\geq 1/4 + \sigma/2$ and $|\gamma|\leq T$ is $\ll T^{\alpha(1/4+\sigma/2)}$ for some $\alpha(1/4+\sigma/2)<1$. Thus if $\Delta=o\left(T^{1-\alpha (1/4+\sigma/2)}\right)$ we may omit every interval of the line segment $[\sigma,\sigma+iT]$ on which $|t-\gamma|<\Delta$ for some $\rho=\beta+i\gamma$ with $\beta\geq 1/4 + \sigma/2$ while ensuring that the union $S(\sigma, T)$ of the remaining intervals of the line segment have 1-dimensional measure $\sim T$ and, in particular, that if $|t-\gamma|<\Delta$ for some $s\in S$ and $\rho=\beta+i\gamma$ then 
\begin{eqnarray}\label{realpartbound}
\sigma-\beta >\sigma-1/4-\sigma/2 \geq \delta/2.
\end{eqnarray}

Firstly we will show that for a suitable choice of $X=X(T)$ the integral
\begin{eqnarray}\label{bi} 
\frac{-i}{T}\int_{S}|\log Z_X (s)|ds\ll \frac{1}{\log^AT}\hspace{1cm}(A\geq 0)
\end{eqnarray}
Then we will use an approximation argument to show that this implies (\ref{toprove}). To this end, recall the formula 
\begin{eqnarray}\label{goneks}
\log Z_X(s)=\sum_{\rho}F_2\left((s-\rho)\log X\right)+O\left(\frac{X^{2-2\sigma}}{(1+t^2)\log^{2} X}\right)
\end{eqnarray}
where  
\begin{eqnarray}
F_2(z)=\int_{z}^{\infty}\frac{e^{-2w}-e^{-w}}{w^2}dw\nonumber
\end{eqnarray}
due to Gonek (\cite{G}, p. 10), and note that the contribution of the error term in (\ref{goneks}) to the integral (\ref{bi}) is 
\begin{eqnarray}\label{errorint}
\ll \frac{X}{T\log^2 X}.
\end{eqnarray} Since
\begin{eqnarray}\label{integrand}
\int_{(s-\rho)\log X}^{\infty}\frac{e^{-w}dw}{w^2} =\int_{0}^{\infty}\frac{X^{-w-s+\rho}dw}{(w+s-\rho)^2\log X}
\ll \frac{X^{\beta-\sigma}} {|s-\rho|^2\log X},\nonumber
\end{eqnarray}
we write
\begin{eqnarray}
\sum_{\rho}&=&\sum_{\rho:|t-\gamma|<\Delta}+\sum_{\rho:\Delta\leq |t-\gamma|<T}+\sum_{\rho:|t-\gamma|\geq T}\nonumber
\end{eqnarray} 
and observe that on $S$ the sum in (\ref{goneks}) is
\begin{eqnarray}\label{zerosumbound}
&\ll & \frac{X^{-\delta/2}}{\delta^2\log X}\sum_{\substack{|t-\gamma|<\Delta\\\sigma-\beta\geq \delta/2}}1
+\frac{X^{-\delta/2}}{\log X}
\sum_{\substack{\Delta\leq |\gamma-t|< T\\\sigma-\beta\geq \delta/2}}\frac{1}{(\gamma-t)^2} \nonumber\\
&+&\frac{X}{\log X}
\sum_{\substack{\Delta\leq |\gamma-t|< T\\\sigma-\beta < \delta/2}}\frac{1}{(\gamma-t)^2} 
+\frac{X}{\log X}\sum_{|\gamma-t|\geq T}\frac{1 }  {(\gamma-t)^2}.
\end{eqnarray}
Recalling now that that the total number of zeros  $\rho=\beta+i\gamma$ with $0<\gamma< t$ is
\begin{eqnarray}
N(t)=\frac{t}{2\pi}\log\frac{t}{2\pi}-\frac{t}{2\pi}+\frac{7}{8}+O(\log t),\nonumber
\end{eqnarray}
it may be easily deduced (see for instance Gonek (\cite{G}, pp. 8-10)) that
\begin{eqnarray}\label{zerocont}
\sum_{\rho:|\gamma-t|< \Delta}1\ll \Delta \log t\hspace{0.3cm}\textrm{ and }\hspace{0.3cm}\sum_{\rho:|\gamma-t|\geq \Delta}\frac{1}{(\gamma-t)^2}\ll \frac{\log t}{\Delta}.
\end{eqnarray}
Estimating the first, second and fourth summations in (\ref{zerosumbound}) using (\ref{zerocont}) we find that it is 
\begin{eqnarray}\label{zerosumbound2}
\ll_{} \frac{X}{\log X}
\sum_{\substack{\Delta\leq |\gamma-t|< T\\\sigma-\beta < \delta/2}}\frac{1}{(\gamma-t)^2}+\frac{\log t}{\log X}
\left(X^{-\delta/2}\left(\frac{\Delta}{\delta^2}+\frac{1}{\Delta}\right)+\frac{X}{T}  \right).\nonumber
\end{eqnarray}
Now if $X(T)=\log^{(2+2A)/\delta} T$ for some fixed $A\geq 0$, the above is 
\begin{eqnarray}\label{zerosumbound3}
&\ll & \frac{\delta\log^{(2+2A)/\delta} T}{\log \log T} \sum_{\substack{\Delta\leq |\gamma-t|< T\\\sigma-\beta < \delta/2}} \frac{1}{(t-\gamma)^2}+\frac{1}{\log ^AT\log \log T}
\left(\frac{\Delta}{\delta}+\frac{\delta}{\Delta} \right)\nonumber
\end{eqnarray}
so, given also the estimate (\ref{errorint}) with this choice of $X$, the integral (\ref{bi}) is
\begin{eqnarray}\label{reduction}
&\ll &\frac{\delta\log^{(2+2A)/\delta} T}{T\log \log T} \sum_{\substack{  |\gamma|< 2T\\  \sigma-\beta < \delta/2}}  \int_{\{0<t<T:\Delta<|t-\gamma|<T\}} \frac{dt}{(t-\gamma)^2}
\nonumber\\
&+&\frac{1}{\log ^AT\log \log T}
\left(\frac{\Delta}{\delta}+\frac{\delta}{\Delta} \right)
\end{eqnarray}
in which the integral is bounded by 
\begin{eqnarray}
\int_{\Delta<t<3T} \frac{dt}{t^2}\ll \frac{1}{\Delta}+\frac{1}{T}\nonumber
\end{eqnarray}
so (\ref{reduction}) is
\begin{eqnarray}\label{reduction2}
&\ll &\frac{\delta T^{\alpha(1/2+\delta/2)-1}\log^{(2+2A)/\delta} T}{\Delta \log \log T}+\frac{1}{\log ^AT\log \log T}
\left(\frac{\Delta}{\delta}+\frac{\delta}{\Delta} \right),\nonumber
\end{eqnarray}
where we have used the zero density estimate $\ll T^{\alpha(1/2+\delta/2)}$  for the number of zeros with $|\gamma|<2T$ and $\beta>\sigma-\delta/2\geq 1/2+\delta/2$. Since this exponent is strictly less than one for any fixed $\delta>0$ and $\Delta$ was arbitrary, this proves (\ref{bi}) for this choice of $X$. \\

We now use an approximation argument to show that the above conclusion implies the proposition. Using the mean value theorem for Dirichlet polynomials with $X$ as above, $N<\log^{(1+A)/\delta} T$ and $\sigma>1/2+\delta$ we have 
\begin{eqnarray}\label{dyadicsums}
\int_{T}^{2T}\left |\log \frac{P_X}{P_N}(\sigma+it)\right|^2dt &\ll& T \sum_{n>N}\frac{\left |\Lambda_X(n)-\Lambda_N(n)\right|^2}{n^{2\sigma}\log^2 n} \nonumber\\
&\ll& TN^{1-2\sigma+\delta}\nonumber\\
&=&TN^{-\delta}.
\end{eqnarray}
Replacing $T$ by $T/2$, $T/4$,... in (\ref{dyadicsums}) and summing, we obtain
\begin{eqnarray}\label{pxpnbound}
\frac{1}{T}\int_{0}^{T}\left |\log \frac{P_X}{P_N}(\sigma+it)\right|^2dt \ll N^{-\delta}.
\end{eqnarray}
Lastly, using the triangle inequality we have 
\begin{eqnarray}\label{proved}
\frac{-i}{T}\int_{S}|\log Z_N (s)|ds &\leq &\frac{-i}{T}\int_{S}|\log Z_X (s)|ds+\frac{1}{T}\int_{0}^{T}\left |\log \frac{P_X}{P_N}(\sigma+it)\right|dt\nonumber\\
&\ll& \frac{1}{\log^AT}+N^{-\delta/2}\nonumber
\end{eqnarray}
by (\ref{bi}) and (\ref{pxpnbound}), which proves (\ref{toprove}).

\section{proofs of the auxiliary results}\label{auxs}

\subsection{Proposition \ref{sig}}
Clearly $\zeta^k\in B^2$ implies that $\sigma_k=1/2$, so we assume $\sigma_k=1/2$ and prove the converse statement. 
It follows from $\mu(1/2)< 1/2k$ and Proposition \ref{lem1} that 
\begin{eqnarray}
\langle \zeta^k,e_{\lambda}\rangle = \left\{
                \begin{array}{ll}
                  \frac{d_{k}(n)}{n^{\sigma}} \hspace{1.25cm}(\lambda=-\log n \hspace{0.3cm}(n\in\mathbb{N}))  \\
               0\hspace{1.83cm}(\textrm{otherwise})
                \end{array}
              \right.\nonumber
              \end{eqnarray}
so, if we set $f=\zeta^k$ and 
\begin{eqnarray}
f_{N}=\sum_{n\leq N}\frac{d_{k}(n)}{n^{\sigma}}n^{-i\cdot},\nonumber
\end{eqnarray}
then 
\begin{eqnarray}\label{beseq}
\|f-f_N\|^2= \|f\|^2+\|f_N\|^2-2\Re\langle f,\overline f_N    \rangle =\|f\|^2-\|f_N\|^2
= \sum_{n> N} \frac{d^2_{k}(n)}{n^{2\sigma}}
\end{eqnarray}
by (\ref{asy3}). Since (\ref{beseq}) is clearly $<\epsilon$ for $N> N(\epsilon)$, we see that $\sigma_k=1/2\Rightarrow \zeta^k\in B^2$.

\subsection{Proposition \ref{lem1}}
Fix $\delta>0$. Denote by $R$ the rectangle with vertices $[\sigma+iT,1+\delta+iT,1+\delta+i,\sigma+i]$ and consider the integral 
\begin{eqnarray}\label{inter}
\frac{1}{2iT}\int_{R}\zeta^{k}(s)e^{\lambda(2\sigma-s)}ds.
\end{eqnarray}
Assuming that $\mu(1/2)<1/k$, there is an $A>0$ such that the sum of the horizontal segments is
\begin{eqnarray}\label{ppo}
\ll \frac{1}{T} \int_{\sigma}^{1+\delta}|\zeta(r+iT)|^{k}e^{\lambda(2\sigma-r)}dr+\frac{e^{\lambda \sigma}}{T}\ll \frac{e^{\lambda \sigma}}{T^{A}}
\end{eqnarray}
and the sum of the vertical segments is
\begin{eqnarray}\label{vgg}
&&\frac{e^{2\lambda\sigma}}{T}\int_{1}^{T}\zeta^{k}(1+\delta+it)e^{-\lambda(1+\delta+it)}dt-\frac{e^{\lambda\sigma}}{T}\int_{1}^{T}\zeta^{k}(\sigma+it)e^{-i\lambda t}dt\nonumber\\
&=&\sum_{n=1}^{\infty} \frac{d_{k}(n)}{n^{1+\delta}}\frac{e^{\lambda(2\sigma-1-\delta)}}{T}\int_{1}^{T}e^{-i(\lambda+\log n) t}dt -\frac{e^{\lambda\sigma}}{T}\int_{1}^{T}\zeta^{k}(\sigma+it)e^{-i\lambda t}dt
\end{eqnarray}
because the Dirichlet series above is absolutely convergent. By (\ref{inter}), (\ref{ppo}) and (\ref{vgg}) we see that 
\begin{eqnarray}
\frac{1}{T}\int_{1}^{T}\zeta^{k}(\sigma+it)e^{-i\lambda t}dt&=&e^{\lambda(\sigma-1-\delta)}\sum_{n=1}^{\infty} \frac{d_{k}(n)}{n^{1+\delta}}\frac{1}{T}\int_{1}^{T}e^{-i(\lambda+\log n) t}dt+O(T^{-A})\nonumber\\
&=&\left\{
                \begin{array}{ll}
                  \frac{d_{k}(n)}{n^{\sigma}} +o\left(1\right)\hspace{1.25cm}(\lambda=-\log n)  \\
               o\left(1\right)\hspace{2.55cm}(\textrm{otherwise}).
                \end{array}
              \right.\nonumber
\end{eqnarray}

\subsection{Lemma \ref{mainlemma}} $A^2$ is an inner product space by definition. Since $B^2$ is a dense subset (in the $A^2$ metric, of course), the completeness of $A^2$ is inherited from that of $B^2$ and for every $f\in A^2$ we then have $f=\phi+g$ pointwise for some $\phi\in E^{\perp}$ and $g\in B^2$.

\subsection{Lemma \ref{ugroup}}
Let $u\in B^2$ and $\nu\in\mathbb{R}$ be fixed. Since the set $\{e_{\lambda}\}_{\lambda\in\mathbb{R}}$ is a complete orthonormal set in $B^2$, for every $\epsilon>0$ there is an integer $M(u,\epsilon)$ and a sequence of real numbers $\lambda_1,\cdots,\lambda_{M}$ such that  
\begin{eqnarray}\label{uexp}
\left\|\overline u -\sum_{m\leq M}\left\langle \overline u, e_{\lambda_m-\nu}\right\rangle e_{\lambda_m-\nu}\right\|<\epsilon  \hspace{1cm}(M>M(u,\epsilon)).
\end{eqnarray}
If $|u|=1$, then it is immediate from (\ref{uexp}) that

\begin{eqnarray}\label{uexp2}
\left\|e_{\nu}-\sum_{m\leq M}\left\langle e_{\nu},ue_{\lambda_m}\right\rangle ue_{\lambda_m}\right\|<\epsilon \hspace{1cm}(M>M(u,\epsilon)).
\end{eqnarray}

For each $\lambda\in\mathbb{R}$ we write $u_{\lambda}=ue_{\lambda}$ and note that the set $\{u_{\lambda}\}_{\lambda\in\mathbb{R}}$ is an orthonormal set in $B^2$. Now suppose that the set $\{u_{\lambda}\}_{\lambda\in\mathbb{R}}$ is not dense in $B^2$, and let $V$ be the proper subspace spanned by $\{u_{\lambda}\}_{\lambda\in\mathbb{R}}$. Since $\{e_{\nu}\}_{\nu\in\mathbb{R}}$ is a complete orthonormal set there is an $e_{\nu_0}\in V^{\perp}$, which contradicts (\ref{uexp2}) so $\{u_{\lambda}\}_{\lambda\in\mathbb{R}}$ is dense in $B^2$. Since a dense orthonormal set in a Hilbert space is complete, it follows that the set $\{ u_{\lambda}\}_{\lambda\in\mathbb{R}}$ is a complete orthonormal set in $B^2$ and by Parseval's theorem we have  
\begin{eqnarray}\label{par2}
\sum_{\lambda\in\mathbb{R}}|\langle \overline uf, e_{\lambda}\rangle |^2=\sum_{\lambda\in\mathbb{R}}|\langle f, u_{\lambda}\rangle |^2  = \|f\|^2\hspace{1cm}     (f\in B^2).\nonumber
\end{eqnarray}
Thus the transformations $M_u$ are continuous on $B^2$ and therefore unitary. 

Lastly, to see that $U$ is a multiplicative subgroup, note that if $u,v\in U$ and $uv=w$ then 
\begin{eqnarray}\label{uexp1}
\left\|w -\sum_{m\leq M}\left\langle w, v_{\lambda_m}\right\rangle v_{\lambda_m}\right\|=\left\|u -\sum_{m\leq M}\left\langle u, e_{\lambda_m}\right\rangle e_{\lambda_m}\right\|<\epsilon\nonumber
\end{eqnarray}
which implies $w\in U$ because $\{ v_{\lambda}\}_{\lambda\in\mathbb{R}}$ is a complete orthonormal set in $B^2$.\\

\paragraph{\bf{Acknowledgement}} I would like to thank Julio Andrade, Roger Heath-Brown and Christopher Hughes for their comments and suggestions on this work. I am also grateful to be supported by the Faculty of Environment, Science and Economy at the University of Exeter.

\noindent\emph{Email address}:\texttt{ks614@exeter.ac.uk}

\end{document}